\documentclass[a4paper,12pt]{article}
\usepackage{amssymb}
\usepackage{mathrsfs}
\usepackage{amsmath}
\usepackage{amsfonts,amsthm,amssymb}
\usepackage{amsfonts}
\usepackage{graphics}
\usepackage{color}
\usepackage{graphicx}
\usepackage{epstopdf}
\usepackage{subfigure}

\textheight=22cm \textwidth=16cm
\parskip = 0.2cm
\topmargin=0cm \oddsidemargin=0cm \evensidemargin=0cm
\newtheorem{lem}{Lemma}[section]
\newtheorem{thm}[lem]{Theorem}
\newtheorem{cor}[lem]{Corollary}

\newtheorem{Conjecture}[lem]{Conjecture}

\begin{document}

\title{Ramsey numbers for  complete graphs versus generalized fans}
\author{Maoqun Wang and Jianguo Qian\footnote{Corresponding author. E-mail: jgqian@xmu.edu.cn (J.G. Qian)}\\
\small School of Mathematical Sciences, Xiamen University, Xiamen 361005, PR China}
\date{}
\maketitle
{\small{\bf Abstract.}\quad For two graphs $G$ and $H$, let $r(G,H)$ and $r_*(G,H)$ denote the  Ramsey number and star-critical Ramsey number of $G$ versus $H$, respectively. In 1996, Li and Rousseau proved that $r(K_{m},F_{t,n})=tn(m-1)+1$ for $m\geq 3$ and sufficiently large $n$, where $F_{t,n}=K_{1}+nK_{t}$. Recently, Hao and Lin proved that $r(K_{3},F_{3,n})=6n+1$ for $n\geq 3$ and $r_{\ast}(K_{3},F_{3,n})=3n+3$ for  $n\geq 4$. In this paper, we show that  $r(K_{m}, sF_{t,n})=tn(m+s-2)+s$ for sufficiently large $n$ and, in particular, $r(K_{3}, sF_{t,n})=tn(s+1)+s$ for $t\in\{3,4\},n\geq t$ and $s\geq1$. We also show that $r_{\ast}(K_{3}, F_{4,n})=4n+4$ for $n\geq 4$ and establish an upper bound on $r(F_{2,m},F_{t,n})$.

\vskip 0.3cm
\noindent{\bf Keywords:} Ramsey number;  star-critical Ramsey number; complete graph; generalized fan

\section{Introduction}

For two graphs $G$ and $H$, we denote by  $r(G,H)$ the {\it Ramsey number} of $G$ versus $H$, that is, the minimum integer $r$ such that any red/blue edge-coloring of $K_{r}$ contains a red $G$ or a blue $H$. By the definition, $K_{r-1}$ has a red/blue edge-coloring that contains neither a red copy of $G$ nor a blue copy of $H$. We call such a coloring a {\it $(G,H)$-free coloring}. For two vertex disjoint graphs $G_{1}$ and $G_{2}$, we denote by $G_{1}+G_{2}$ the join of $G_{1}$ and $G_{2}$, that is, the graph obtained from $G_{1}$ and $G_{2}$ by adding all edges between $V(G_{1})$ and $V(G_{2})$. Further, for a graph $G$ and positive integer $n$, the disjoint union of $n$ copies of $G$ is denoted by $nG$. For positive integer $n$, the {\it fan} $F_n$ is defined as $K_{1}+nK_{2}$. Generally, the {\it generalized fan}  $F_{t,n}$ is defined by $K_{1}+nK_{t}$. It is clear that $F_{2,n}=F_{n}$.

Ramsey theory is a fascinating branch in combinatorics. Most problems in this area are far from being solved, which stem from the classic problem of determining the number $r(K_n,K_n)$. In this paper we focus on the Ramsey numbers for  complete graphs versus generalized fans.

 In \cite{Y}, Li and Rousseau proved the following result.

\begin{thm}\label{thm 1.1}\cite{Y}
For any graphs $G$ and $H$, there is a positive integer $n(G,H)$ such that for any integer $n\geq n(G,H)$,
$$r(K_{2}+H,K_{1}+nG)=kn(\chi(H)+1)+1,$$
where $k$ is the number of the vertices in $G$ and $\chi(H)$ the chromatic number of $H$.
\end{thm}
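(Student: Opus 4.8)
\emph{Plan.} Write $\chi=\chi(H)$, $p=\chi+1$; for a given $2$-colouring let $R$ and $B$ be its red and blue subgraphs. I will use two basic facts: $\chi(K_2+H)=\chi(K_2)+\chi(H)=p+1$, and $K_2+H$ is colour-critical, because deleting the edge of its $K_2$ leaves $\overline{K_2}+H$, whose chromatic number drops to $p$ (colour the two now-nonadjacent apices alike). For the \emph{lower bound} I would colour $K_{knp}$ so that $R$ is the complete $p$-partite graph with every part of size $kn$, whence $B$ is a disjoint union of $p$ copies of $K_{kn}$: then $\chi(R)=p<p+1=\chi(K_2+H)$, so $R$ has no $K_2+H$, while each blue clique has only $kn<kn+1=|V(K_1+nG)|$ vertices, so $B$ has no $K_1+nG$. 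Hence $r(K_2+H,K_1+nG)\ge knp+1$ for every $k$-vertex $G$; the work is the upper bound.

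\emph{Reduction and Step 1 (degree bound).} Since $G\subseteq K_k$ we have $K_1+nG\subseteq K_1+nK_k$, so $r(K_2+H,K_1+nG)\le r(K_2+H,K_1+nK_k)$ and it suffices to treat $G=K_k$. So fix $N=knp+1$ with $n$ large and a $2$-colouring of $K_N$ with no red $K_2+H$ and no blue $K_1+nK_k$, aiming for a contradiction. For any vertex $w$, a largest family of vertex-disjoint blue $K_k$'s inside $N_B(w)$ has at most $n-1$ members (otherwise a blue $K_1+nK_k$ with apex $w$), covering at most $k(n-1)$ vertices; on the remaining set $Z\subseteq N_B(w)$ the blue graph has no $K_k$, so if $|Z|\ge r(K_k,K_{|V(H)|+2})$ then $R[Z]\supseteq K_{|V(H)|+2}\supseteq K_2+H$, a contradiction. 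Hence $|Z|=O(1)$, $d_B(w)\le kn+O(1)$ for every $w$, and therefore $\delta(R)\ge(1-\tfrac1p)N-O(1)$ and $e(R)\ge(1-\tfrac1p)\binom N2-O(N)$.

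\emph{Step 2 (exact structure) and conclusion.} Because $K_2+H$ is colour-critical with $\chi(K_2+H)=p+1$, Simonovits' theorem gives $\operatorname{ex}(N,K_2+H)=e(T_p(N))$ for large $N$ (the balanced complete $p$-partite graph $T_p(N)$), and since $e(R)$ is within $o(N^2)$ of this value, the Erd\H{o}s--Simonovits stability theorem makes $R$ $p$-partite after deleting $o(N^2)$ edges. I would then upgrade this to an exact $p$-partition $A_1,\dots,A_p$ (the one minimising the number of edited edges): here $e(R[A_i])=o(N^2)$, each $|A_i|=\tfrac Np+o(N)$, and each vertex has essentially fewest red neighbours in its own part, which together with $\delta(R)$ shows that all but $o(N)$ of the vertices are red-adjacent to all but $o(N)$ vertices of every other part. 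The crucial claim is that each $A_i$ is then independent in $R$: if $uv$ were a red edge inside some $A_i$ with endpoints of this typical kind, the common red neighbourhood of $u$ and $v$ would meet each $A_j$ ($j\ne i$) in all but $o(N)$ of its $\ge|V(H)|$ vertices, making the red graph there a complete $(p-1)$-partite graph minus $o(N^2)$ edges, into which the $(p-1)$-colourable graph $H$ embeds greedily (each step forbidding only $o(N)$ choices) — producing a red $K_2+H$, impossible; the exceptional vertices must be dealt with by a finer argument. Granting this, every $A_i$ is a blue clique, so by pigeonhole some $A_i$ has $\ge\lceil N/p\rceil=kn+1$ vertices, whence $B\supseteq K_{kn+1}\supseteq K_1+nK_k$, the desired contradiction. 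This gives $r(K_2+H,K_1+nK_k)\le knp+1$, and with the reduction and the lower bound, $r(K_2+H,K_1+nG)=kn(\chi(H)+1)+1$ once $n$ passes a threshold $n(G,H)$ (forced by Simonovits' theorem, by $\tfrac Np+o(N)\ge|V(H)|$, and by the stability estimates).

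\emph{Main obstacle.} Step 2 is the heart of the matter: $\delta(R)\ge(1-\tfrac1p)N-O(1)$ places $R$ right at (a bounded amount below) the Tur\'an threshold, leaving no slack for a soft argument, so one must combine a \emph{sharp} Andr\'asfai--Erd\H{o}s--S\'os-type stability statement for the colour-critical graph $K_2+H$ with careful book-keeping of the $o(N^2)$ deviations — in particular of the few ``exceptional'' vertices that stability leaves behind, which is exactly where ``$n$ sufficiently large'' is indispensable — in order to obtain the \emph{exact} $p$-partition and hence the blue clique of size $kn+1$. Step 1 and the lower bound are routine by comparison.
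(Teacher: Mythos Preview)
The paper does not contain a proof of this statement: Theorem~1.1 is quoted from Li and Rousseau~\cite{Y} and used only as background, so there is no ``paper's own proof'' to compare your proposal against.

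On the proposal itself: your lower bound and Step~1 are correct and standard. Your reduction $r(K_2+H,K_1+nG)\le r(K_2+H,K_1+nK_k)$ is fine, and the observation that $K_2+H$ has a critical edge (the apex edge) is exactly what Simonovits' theorem needs. The genuine gap is where you say it is: in Step~2 you obtain $\delta(R)\ge(1-\tfrac1p)N-O(1)$, which is only $O(1)$ below the Tur\'an threshold, not $o(N)$ below it, so ordinary Erd\H{o}s--Simonovits stability gives you a $p$-partition with $o(N^2)$ bad edges but does not by itself rule out a handful of vertices sitting in the ``wrong'' part; you explicitly defer handling these exceptional vertices to ``a finer argument'' that you do not supply. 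That cleanup is the whole difficulty --- one typically needs either an Andr\'asfai--Erd\H{o}s--S\'os-type statement tailored to the colour-critical forbidden graph $K_2+H$, or a direct argument that any red edge inside a part (even at an atypical vertex) forces a red $K_2+H$ via the high minimum degree. Until that step is written out, the upper bound is a plausible outline rather than a proof.
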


\begin{Conjecture}\label{Conjecture 1.2}\cite{E}
For any $n\geq m\geq 3$, $r(K_{m},F_{n})=2n(m-1)+1$.
\end{Conjecture}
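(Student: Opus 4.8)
The plan is to split the statement into its (easy) lower bound and its (hard) upper bound, proving the latter by induction on $m$ with the structure of near-extremal colourings as the crux.

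For the lower bound, I would colour $K_{2n(m-1)}$ by partitioning the vertices into $m-1$ blocks of size $2n$, colouring every edge inside a block blue and every edge between two blocks red. Then the red graph is complete $(m-1)$-partite, hence $K_m$-free, and the blue graph is a vertex-disjoint union of $m-1$ cliques $K_{2n}$; since $F_n=K_1+nK_2$ is connected on $2n+1>2n$ vertices it embeds in none of these components. So this is a $(K_m,F_n)$-free colouring and $r(K_m,F_n)\ge 2n(m-1)+1$ for all $n\ge 1$ and $m\ge 2$.

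For the upper bound I would induct on $m$, the base case $m=2$ being $r(K_2,F_n)=2n+1$ (an all-blue $K_{2n+1}$ contains $F_n$; any red edge is a red $K_2$). For the inductive step, let $N=2n(m-1)+1$ and take any $(K_m,F_n)$-free colouring of $K_N$, aiming for a contradiction. If some vertex $v$ had red degree at least $2n(m-2)+1=r(K_{m-1},F_n)$, its red neighbourhood would contain a red $K_{m-1}$ (and with $v$ a red $K_m$) or a blue $F_n$ by the induction hypothesis; so every vertex has blue degree at least $2n$. If the blue graph on the blue neighbourhood $B_v$ of some $v$ had a matching of size $n$, then $v$ together with it would be a blue $F_n$; so for every $v$ that blue graph has matching number at most $n-1$, whence the $\ge |B_v|-2(n-1)$ vertices of $B_v$ missed by a maximum matching are pairwise non-adjacent in blue, i.e. form a red clique. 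If this clique ever had $m$ vertices we would get a red $K_m$, so $|B_v|\le 2n+m-3$ for every $v$; consequently the red graph is $K_m$-free with every degree in the window $[\,2n(m-2)-m+3,\ 2n(m-2)\,]$, i.e. almost as dense as the Tur\'an graph $T_{m-1}(N)$.

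The final step would be the Andr\'asfai--Erd\H{o}s--S\'os theorem: a $K_m$-free graph on $N$ vertices with minimum degree exceeding $\frac{3m-7}{3m-4}N$ is $(m-1)$-partite. Since $\frac{m-2}{m-1}>\frac{3m-7}{3m-4}$ (it reduces to $8>7$), once $n$ is large enough in $m$ --- concretely when $2n>3m^2-10m+5$ --- the red minimum degree clears the threshold, so the red graph is $(m-1)$-partite; then some part has at least $2n+1$ vertices, which span a blue $K_{2n+1}\supseteq F_n$, the desired contradiction. This establishes the conjecture for $n$ of order $m^2$ (and makes the ``sufficiently large $n$'' of Theorem~\ref{thm 1.1} explicit for $G=K_2$), but the main obstacle is the remaining range $m\le n=O(m^2)$: there the degree window is too far from the Tur\'an value for Andr\'asfai--Erd\H{o}s--S\'os to apply, and one would have to replace it by a stability analysis --- arguing that a $K_m$-free graph on $2n(m-1)+1$ vertices with almost all degrees near $2n(m-2)$ is either $(m-1)$-partite or, after deleting boundedly many vertices, still contains a blue $K_{2n+1}$ --- together with an exact treatment of the colourings close to the extremal ``$m-1$ blue $K_{2n}$'s'' example, using the single surplus vertex to force a red $K_m$ or blue $F_n$. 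Carrying this through uniformly in $m$, especially for the small cases, is the real difficulty and why the conjecture is open.
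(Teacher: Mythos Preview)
This statement is Conjecture~\ref{Conjecture 1.2}, not a theorem: the paper does \emph{not} prove it and gives no proof to compare against. The paper simply quotes it from \cite{E}, observes that Theorem~\ref{thm 1.1} (with $G=K_2$, $H=K_{m-2}$) confirms it for sufficiently large $n$, and lists in Theorem~\ref{thm 1.3} the four small cases $m\in\{3,4,5,6\}$ that have been settled individually in the literature.

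Your proposal is an honest account of exactly this state of affairs. The lower-bound colouring is the standard one (this is the Burr bound~(\ref{1})). Your upper-bound argument --- bound the blue degree by $2n+m-3$ via a matching/clique count in $B_v$, deduce the red graph is $K_m$-free with minimum degree $\ge 2n(m-2)-m+3$, and apply Andr\'asfai--Erd\H{o}s--S\'os --- is correct and yields the conjecture whenever $n$ is roughly of order $m^2$; this is a clean way to make the ``sufficiently large $n$'' in Theorem~\ref{thm 1.1} explicit for this special case. You then correctly name the genuine obstruction: in the window $m\le n=O(m^2)$ the red minimum degree falls below the Andr\'asfai--Erd\H{o}s--S\'os threshold, and a finer stability/exact analysis would be needed. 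That is indeed why the conjecture remains open, and the paper makes no attempt to close this gap either.

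One small caution on your inductive framing: you invoke $r(K_{m-1},F_n)=2n(m-2)+1$ as the inductive hypothesis, but since you cannot complete the inductive step for all $n\ge m$, the induction never actually closes; what your argument really uses (and legitimately) is only the inequality $r(K_{m-1},F_n)\le 2n(m-2)+1$ to bound red degrees, and that inequality is itself what you are trying to propagate. This is not an error so much as a reminder that the whole scheme stands or falls on the stability step you flagged.
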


Choosing $G=K_{2}$ and $H=K_{m-2}$ in Theorem~\ref{thm 1.1}, we see that Theorem~\ref{thm 1.1} supports Conjecture~\ref{Conjecture 1.2} for any $m$ and sufficiently large $n$. In contrast to sufficiently large $n$, the conjecture is also confirmed for some small $m$'s as follows.

\begin{thm}\label{thm 1.3}
The following statement holds:
\begin{description}
  \item[(i)]  $r(K_{3},F_{n})=4n+1$ for any $n\geq 3$ \cite{L,Y};
  \item[(ii)]  $r(K_{4},F_{n})=6n+1$ for any $n\geq 4$ \cite{E};
  \item[(iii)]  $r(K_{5},F_{n})=8n+1$ for any $n\geq 5$ \cite{Chen};
  \item[(iv)]  $r(K_{6},F_{n})=10n+1$ for any $n\geq 6$ \cite{STY}.
\end{description}
\end{thm}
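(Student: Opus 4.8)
The value claimed in all four items is $2n(m-1)+1$ with $m\in\{3,4,5,6\}$, so I would organize the proof around the single identity $r(K_m,F_n)=2n(m-1)+1$, treating the four cases through one common scheme and pointing out where each needs its own input.

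\noindent\textbf{Lower bound.} For every $m$ one construction suffices: split $V(K_{2n(m-1)})$ into $m-1$ blocks of size $2n$, and colour an edge blue when its two ends lie in a common block, red otherwise. The red graph is complete $(m-1)$-partite, hence contains no $K_m$, while the blue graph is a disjoint union of $m-1$ copies of $K_{2n}$ and, since $|V(F_n)|=2n+1>2n$, contains no $F_n$. Thus $r(K_m,F_n)\ge 2n(m-1)+1$ in all four cases.

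\noindent\textbf{Upper bound.} Colour $K_N$ with $N=2n(m-1)+1$ and assume there is no red $K_m$; the goal is a blue $F_n$, i.e.\ a vertex $v$ together with a blue matching of size $n$ inside its blue neighbourhood $B(v)$. Two observations drive the argument. First, if $H$ is the blue graph induced on $B(v)$, then every independent set of $H$ is a red clique, so $\alpha(H)\le m-2$; since a graph on $h$ vertices has a matching of size at least $(h-\alpha)/2$ (the vertices missed by a maximum matching form an independent set), any vertex with $d_B(v)\ge 2n+m-2$ already yields a blue $F_n$. Second, if no vertex has blue degree that large, then every vertex has red degree close to $2n$, so the red graph is a $K_m$-free graph on $N$ vertices of near-maximal minimum degree; here I would invoke a minimum-degree stability theorem for $K_m$-free graphs (the Andr\'asfai--Erd\H{o}s--S\'os theorem when $m=3$, and its $K_m$-free analogues for $m\in\{4,5,6\}$) to force the red graph to be complete $(m-1)$-partite. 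One of its $m-1$ colour classes then has at least $\lceil N/(m-1)\rceil=2n+1$ vertices and induces a blue $K_{2n+1}\supseteq F_n$. Gluing the two regimes requires bookkeeping of the exact degree thresholds, and the borderline configurations (the red graph complete multipartite up to a few edges, or a near-regular red graph) must be cleared by a short direct argument; in the $m=3$ case this last step reduces to observing that a $2n$-regular triangle-free graph on the odd number $4n+1$ of vertices cannot be bipartite, hence cannot exist.

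\noindent\textbf{Main obstacle.} The difficulty is concentrated in the intermediate regime of the upper bound: when the minimum red degree is moderately large but below the stability threshold, the red graph need not be $(m-1)$-partite, and one must combine a careful count of blue edges with a greedy or alternating-path construction of the blue matching inside a well-chosen blue neighbourhood. For $m=3$ this is light because triangle-free graphs are so rigid; for $m=4,5,6$ each successive case demands a sharper $K_m$-free stability statement and a heavier case split, which is exactly why items (ii)--(iv) appear in separate papers. A secondary, and in practice fiddly, obstacle is that Theorem~\ref{thm 1.1} supplies these equalities only for sufficiently large $n$, so the small values $n\ge m$ must be checked separately, and this boundary analysis tends to be the most computational part of the whole argument.
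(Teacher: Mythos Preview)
The paper does not prove Theorem~\ref{thm 1.3} at all: each item is quoted from the literature (references \cite{L,Y}, \cite{E}, \cite{Chen}, \cite{STY}) and is used only as input to Corollary~\ref{cor 1.8}. There is therefore no ``paper's own proof'' to compare your proposal against.

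As for your sketch on its own merits, the overall architecture---Tur\'an-type lower bound plus a two-regime upper bound via a blue-neighbourhood matching argument and then $K_m$-free stability---is indeed the shape of the arguments in the cited papers, and your honest acknowledgement that the middle regime and the small-$n$ boundary carry the real work is accurate. One concrete slip: inside a blue neighbourhood $B(v)$, an independent set of the blue graph is a red clique, but since $v$ is joined to $B(v)$ by \emph{blue} edges you cannot adjoin $v$ to enlarge that clique; the correct bound is $\alpha(H)\le m-1$, not $m-2$. Consequently the threshold in your first observation should read $d_B(v)\ge 2n+m-1$. This shifts the degree bookkeeping by one throughout, which matters precisely in the delicate boundary analysis you flag at the end.
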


Let $K_{r-1}\sqcup S_{k}$ denote the graph obtained from $K_{r-1}$ by adding a new vertex $v$ and an edge  joining $v$ to each of $k$ vertices of $K_{r-1}$. For $r=r(G,H)$, the {\it star-critical Ramsey number} $r_{\ast}(G,H)$ is defined as the minimum integer $r_{\ast}$ such that any red/blue edge-coloring of $K_{r-1}\sqcup S_{r_{\ast}}$ contains a red $G$ or a blue $H$. This parameter was first introduced by Hook and Isaak \cite{Hook}, which could be viewed as a refinement of the Ramsey number. For results on star-critical Ramsey numbers involving $F_{t,n}$, we refer the readers to references \cite{Hao,Hao2,Hajhi,ZL,Yan}. In \cite{Hao}, Hao and Lin  determined the Ramsey number and the star-critical Ramsey number of $K_{3}$ versus $F_{3,n}$.

\begin{thm}\label{thm 1.4}\cite{Hao}
$r(K_{3}, F_{3,n})=6n+1$ for any integer $n\geq 3$ and $r_{\ast}(K_{3}, F_{3,n})=3n+3$ for any integer $n\geq 4$.
\end{thm}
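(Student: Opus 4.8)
I would split Theorem~\ref{thm 1.4} into $r(K_3,F_{3,n})=6n+1$ and $r_\ast(K_3,F_{3,n})=3n+3$, each needing an extremal colouring and an upper-bound argument. For $r(K_3,F_{3,n})\ge 6n+1$, colour $K_{6n}$ by splitting the vertices into $X\cup Y$ with $|X|=|Y|=3n$, colouring all edges inside $X$ and inside $Y$ blue and all $X$--$Y$ edges red. The red graph is the bipartite (hence triangle-free) graph $K_{3n,3n}$, and the blue graph is $K_{3n}\sqcup K_{3n}$, whose components have only $3n<3n+1$ vertices and so cannot contain the connected graph $F_{3,n}$. For $r_\ast(K_3,F_{3,n})\ge 3n+3$, keep this colouring on $K_{6n}$ and join the new vertex $v$ of $K_{6n}\sqcup S_{3n+2}$ to all of $X$ by red edges and to two fixed vertices of $Y$ by blue edges, so $\deg v=3n+2$. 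The red graph stays bipartite (sides $X$ and $Y\cup\{v\}$), and in the blue graph $v$ only extends the $Y$-component to $3n+1$ vertices while having blue-degree $2$ inside it; since a spanning copy of $F_{3,n}$ in that component would need a hub of degree $3n$ and $v$ lies in no triangle there, no blue $F_{3,n}$ appears.

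\noindent\textbf{Upper bound $r(K_3,F_{3,n})\le 6n+1$.}
Take a red/blue colouring of $K_{6n+1}$ with no red triangle; let $R$ be the red graph (triangle-free) and $B$ the blue graph, so $\alpha(B)\le 2$ (three pairwise blue-nonadjacent vertices would form a red triangle). If $\Delta(R)\ge 3n+1$, the red neighbourhood of a vertex of maximum red degree is red-independent, i.e.\ a blue clique of order $\ge 3n+1$, and $K_{3n+1}\supseteq F_{3,n}$. So assume $\Delta(R)\le 3n$, equivalently $\delta(B)\ge 3n$. If $R$ is bipartite, its larger side has $\ge 3n+1$ vertices and induces a blue clique, again finishing the proof. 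If $R$ is not bipartite, then by the Andr\'asfai--Erd\H{o}s--S\'os theorem $\delta(R)\le \tfrac{2(6n+1)}{5}$, so some vertex $v$ has $d_B(v)\ge \tfrac{18n-2}{5}$. A useful self-contained fact: in any graph with independence number at most $2$, a maximum packing of vertex-disjoint triangles leaves at most $5$ uncovered vertices, since the uncovered set induces a triangle-free graph with independence number at most $2$ and therefore has fewer than $r(K_3,K_3)=6$ vertices. Applying this inside $B[N_B(v)]$ (which still has independence number at most $2$) produces at least $\lfloor (d_B(v)-5)/3\rfloor$ disjoint blue triangles, which is $\ge n$ as soon as $d_B(v)\ge 3n+5$, i.e.\ once $n\ge 9$; taking $v$ as hub then yields a blue $F_{3,n}$. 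The finitely many smaller $n$, and the regime where the Andr\'asfai--Erd\H{o}s--S\'os estimate is not quite sharp enough, I would settle using the homomorphism/stability refinements of Andr\'asfai--Erd\H{o}s--S\'os, which force any near-extremal $R$ to be a clique-blow-up of a bounded Andr\'asfai-type graph; a direct check then shows such a $B$ contains $F_{3,n}$ unless $R$ is essentially $K_{3n,3n}$.

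\noindent\textbf{Upper bound $r_\ast(K_3,F_{3,n})\le 3n+3$.}
Suppose $K_{6n}\sqcup S_{3n+3}$ has a colouring with no red $K_3$ and no blue $F_{3,n}$, the new vertex $v$ being joined to a set $D$ with $|D|=3n+3$. The induced colouring on $K_{6n}$ is $(K_3,F_{3,n})$-free, and the analysis above supplies the structural statement that $V(K_{6n})$ has a partition $X\cup Y$ with $|X|=|Y|=3n$ in which $X$ and $Y$ induce blue cliques and all but a controlled, mutually non-interacting set of $X$--$Y$ edges are red. Both $D\cap X$ and $D\cap Y$ have size at most $3n$, so both are non-empty, and the red neighbours of $v$ are essentially confined to one side, say $X$, since a red neighbour in each side together with a red $X$--$Y$ edge would give a red triangle. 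If $v$ were blue-adjacent to all of $Y$, then $v$ with the blue clique $Y$ would be a blue $F_{3,n}$; hence $|D\cap Y|\le 3n-1$ and $|D\cap X|\ge 4$. Now pick a hub $y\in D\cap Y$: its blue neighbourhood contains $(Y\setminus\{y\})\cup\{v\}$, the clique $Y\setminus\{y\}$ supplies $n-1$ disjoint triangles on $3n-3$ of its vertices, and if $v$ has two blue neighbours among the two remaining vertices of $Y\setminus\{y\}$ --- which can be arranged once $|D\cap Y|\ge 3$ --- we obtain an $n$th triangle through $v$, hence a blue $F_{3,n}$. Therefore $|D\cap Y|\le 2$, forcing $|D\cap X|\ge 3n+1>3n$, a contradiction; the hypothesis $n\ge 4$ is used only to rule out small-$n$ degeneracies in the structural statement.

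\noindent\textbf{Main obstacle.}
The hard part is the non-bipartite branch of the first upper bound together with the structure lemma it must yield. With only $\delta(B)\ge 3n$ on $6n+1$ vertices, naive degree counting has far too much slack --- a single vertex can be blue-adjacent to exactly two vertices of every triangle in a maximum packing and cover none of them --- so pinning $R$ down to within bounded edit distance of $K_{3n,3n}$ genuinely needs the stability theory of dense triangle-free graphs (or an intricate ad hoc case analysis), and the entire star-critical part then rides on the precise form of that structure.
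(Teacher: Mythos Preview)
Theorem~\ref{thm 1.4} is quoted from Hao and Lin \cite{Hao} and is not proved in the present paper, so there is no in-paper argument to compare your attempt against directly. The paper does, however, prove the parallel statement for $F_{4,n}$ (Theorem~\ref{thm 1.5}), and that proof makes the intended method visible.

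Your lower bounds are correct. The upper-bound argument has the gap you yourself flag: the Andr\'asfai--Erd\H{o}s--S\'os step produces a \emph{single} vertex $v$ with $d_B(v)\ge(18n-2)/5$, and the packing trick then needs $d_B(v)\ge 3n+5$, which only holds for $n\ge 9$; the appeal to ``stability refinements'' for $3\le n\le 8$ is not a proof. More seriously, the star-critical half rests on the structural claim that every $(K_3,F_{3,n})$-free colouring of $K_{6n}$ has $B\supseteq 2K_{3n}$, and nothing in your upper-bound branch establishes that global statement --- Andr\'asfai--Erd\H{o}s--S\'os yields one vertex of large blue degree, not a partition into two blue $3n$-cliques. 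The paper's route for $F_{4,n}$ bypasses all of this: instead of bounding $\delta(R)$, it uses the exact value $r(K_3,nK_4)=4n+3$ (Lemma~\ref{lem 2.1}) to bound $d_B(u)$ for \emph{every} vertex, so every red neighbourhood is a large blue clique; taking a vertex inside that clique and repeating gives two disjoint large blue cliques, and a short direct argument (Lemma~\ref{lem 2.3} and Lemma~\ref{lem 2.4}) absorbs the few leftover vertices to obtain $B\supseteq 2K_{4n}$ outright, for all relevant $n$, with no stability theory and no small-case checking. The analogous argument with $r(K_3,nK_3)$ in place of $r(K_3,nK_4)$ is precisely the missing ingredient in your sketch, and it is what the star-critical part needs as input.
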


Recently, Hamm et al. \cite{A} determined the Ramsey number of $sK_{m}$ versus $F_{t,n}$ for sufficiently large $n$ as follows.

\begin{thm}\label{thm 1.6}\cite{A}
Let $m$, $s$, $t$, $n$ be four positive integers with $m\geq 3$ and $tn\geq \max \{(m+1)(m+C(t,m)+t)+3, \frac{(m-1)^{2}}{m-2}((s-1)(m-1)+t)\}$, where $C(t,m)$ is a constant depending only on $t$ and $m$. Then $r(sK_{m}, F_{t,n})=tn(m-1)+s$.
\end{thm}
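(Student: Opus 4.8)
For the lower bound $r(sK_m,F_{t,n})\ge tn(m-1)+s$, the plan is to give an explicit $(sK_m,F_{t,n})$-free coloring of $K_{tn(m-1)+s-1}$: take pairwise disjoint sets $V_1,\dots,V_{m-1}$ of size $tn$ each, color every edge inside a $V_i$ blue and every edge between distinct $V_i$ red, and adjoin $s-1$ extra vertices all of whose incident edges are red. In blue this is a disjoint union of $m-1$ cliques of order $tn$, so no connected blue subgraph spans $tn+1$ vertices and there is no blue $F_{t,n}$; a red clique meets each $V_i$ in at most one vertex, so every red $K_m$ uses one of the $s-1$ extra vertices, and hence no $s$ pairwise disjoint red $K_m$'s exist.

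For the upper bound I would argue by contradiction via a maximum-packing reduction. Assume $K_N$ with $N=tn(m-1)+s$ is $2$-colored with neither a blue $F_{t,n}$ nor a red $sK_m$. Let $\mathcal M$ be a maximum collection of pairwise disjoint red $K_m$'s, set $k=|\mathcal M|\le s-1$, let $R$ be the union of their vertex sets and $W$ the remaining $N-km$ vertices. Then $W$ induces a coloring with no red $K_m$ and no blue $F_{t,n}$, and the first hypothesis on $tn$ makes $n$ large enough that Theorem~\ref{thm 1.1} --- applied with $G=K_t$, $H=K_{m-2}$, so that $K_2+H=K_m$, $K_1+nG=F_{t,n}$ and $\chi(H)=m-2$ --- gives $r(K_m,F_{t,n})=tn(m-1)+1$. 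Hence $|W|\le tn(m-1)$, so $|R|=km\ge s$ and in particular $k\ge1$; on the other hand $|W|\ge N-(s-1)m=tn(m-1)-(s-1)(m-1)+1$, so $W$ is a $(K_m,F_{t,n})$-free configuration within $(s-1)(m-1)$ vertices of the extremal size.

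The heart of the argument is a stability analysis of $W$. Being this close to extremal, the red graph on $W$ should be essentially complete $(m-1)$-partite: there is a partition $W=W_1\cup\dots\cup W_{m-1}$ with each $|W_i|$ within $(s-1)(m-1)$ of $tn$, almost all edges inside each $W_i$ blue and almost all edges between distinct parts red. This is where the bound $tn\ge\frac{(m-1)^2}{m-2}\bigl((s-1)(m-1)+t\bigr)$ enters: it keeps the number of non-conforming edges below the threshold at which either a blue $F_{t,n}$ or an additional disjoint red $K_m$ would be forced. Granting this, every $u\in R$ must be red-adjacent to at least $(s-1)(m-1)+t$ vertices in the robust part of each $W_i$, for otherwise $u$ is blue-joined to almost all of some $W_i$, which contains a blue clique of order at least $tn$ that splits into $n$ blue $K_t$'s, making $u$ the center of a blue $F_{t,n}$. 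I would then greedily assemble $s$ pairwise disjoint red $K_m$'s, each of the form $\{u\}\cup\{w_1,\dots,w_{m-1}\}$ with the apices $u$ distinct vertices of $R$ (possible since $|R|=km\ge s$) and $w_i$ an as-yet-unused red neighbor of $u$ in the robust part of $W_i$ (possible since those parts are large, carry mutually red edges across parts, and $u$ has many red neighbors in each). These $s$ disjoint red $K_m$'s contradict the maximality of $\mathcal M$, completing the proof.

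The step I expect to be the main obstacle is making this stability quantitatively precise: bounding how many exceptional edges $W$ may carry while still excluding a blue $F_{t,n}$, and propagating that bound to the part sizes $|W_i|$ and to the red degrees of the $R$-vertices into each part. Once the structure of $W$ is pinned down, the greedy construction of the $s$ disjoint red $K_m$'s and the lower-bound coloring are both essentially routine; it is this extremal bookkeeping --- which the somewhat awkward hypothesis on $tn$ is precisely designed to absorb --- that carries the real weight.
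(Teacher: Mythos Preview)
This theorem is not proved in the present paper: it is stated with citation \cite{A} to Hamm, Hazelton and Thompson and is then used only as an input to Corollary~\ref{cor 1.10} and to the proof of Theorem~\ref{thm 1.11}. There is therefore no proof here to compare your proposal against.

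For what it is worth, your outline is a plausible route. The lower-bound coloring is correct. For the upper bound, the maximum-packing reduction combined with $r(K_m,F_{t,n})=tn(m-1)+1$ (Theorem~\ref{thm 1.1} with $G=K_t$, $H=K_{m-2}$) to force $|W|\le tn(m-1)$ and hence $|R|\ge s$ is the natural opening, and you are right that the substantive work is the stability analysis on $W$, which is exactly what the second hypothesis on $tn$ is calibrated for. One detail to watch in the final greedy construction: when you pick $w_i\in W_i$ as red neighbours of a given apex $u\in R$, you also need the $w_i$ to be pairwise red-adjacent, so you must stay inside the portion of $W$ where the near-complete-$(m-1)$-partite structure is exact; your sketch gestures at this via the phrase ``robust part'', but that bookkeeping is where the argument is actually won or lost.
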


In 1975, Burr, Erd\H{o}s and Spencer \cite{SB} investigated Ramsey number for disjoint union of small graphs and showed that $r(nK_{3},nK_{3})=5n$ for $n\geq 2$. Li and Rousseau \cite{Y} first studied off-diagonal Ramsey number of fans. They showed that $4n+1\leq r(F_{m},F_{n})\leq 4n+4m-2$ for $n\geq m\geq 1$. Later in \cite{Q}, Lin and Li proved that $r(F_{2},F_{n})=4n+1$ for $n\geq 2$ and improved the upper bound of Li and Rousseau to be $r(F_{m},F_{n})\leq 4n+2m$ for $n\geq m\geq 2$. Recently, Chen, Yu and Zhao \cite{Yu} showed that $9n/2-5\leq r(F_{n},F_{n})\leq 11n/2+6$ for any $n$. For more results on Ramsey numbers involving $F_{t,n}$, we refer the readers to references \cite{D,SP,Z}.

In this paper, we  prove the following two results.
\begin{thm}\label{thm 1.5}
For any integer $n\geq4$, $r(K_{3}, F_{4,n})=8n+1$ and $r_{\ast}(K_{3}, F_{4,n})=4n+4$.
\end{thm}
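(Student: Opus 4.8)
\noindent\emph{Plan.}

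\emph{Lower bounds.} Both inequalities come from an explicit colouring. For $r(K_{3},F_{4,n})\ge 8n+1$, split the vertices of $K_{8n}$ into two classes $V_{1},V_{2}$ of size $4n$, colour every edge inside a class blue and every edge between the classes red; the red graph is bipartite, hence has no $K_{3}$, and since $F_{4,n}$ is connected on $4n+1$ vertices whereas each blue class has only $4n$ vertices, there is no blue $F_{4,n}$. For $r_{\ast}(K_{3},F_{4,n})\ge 4n+4$, extend this colouring of $K_{r-1}=K_{8n}$ by a hub $v$ joined to $4n+3$ of its vertices: make $4n-2$ of the $v$--$V_{1}$ edges red, the remaining two $v$--$V_{1}$ edges blue, and all three $v$--$V_{2}$ edges blue. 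No red $K_{3}$ is created, because every red edge at $v$ lies inside the red-empty class $V_{1}$; and no blue $F_{4,n}$ arises, because centred at $v$ the blue neighbourhood is far too small, while centred at a vertex of $V_{i}$ the blue neighbourhood has at most $4n$ vertices, inside which $v$ has blue-degree at most $1$ and hence belongs to no blue $K_{4}$, so the required $n$ disjoint blue $K_{4}$'s would have to fit in the remaining $4n-1$ vertices.

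\emph{Upper bound for $r$.} Let $K_{8n+1}$ be $2$-coloured with no red $K_{3}$; write $R$ (red) and $B$ (blue), so $R$ is triangle-free, and suppose $B$ has no $F_{4,n}$. If $\Delta(R)\ge 4n+1$, the red neighbourhood of a maximum-degree vertex is independent in $R$, i.e. a blue $K_{4n+1}$, which contains $F_{4,n}$ --- contradiction; so $\Delta(R)\le 4n$. If $R$ were bipartite, then as $8n+1$ is odd some class has at least $4n+1$ vertices, again a blue $K_{4n+1}\supseteq F_{4,n}$; hence $R$ is triangle-free and non-bipartite, so by the Andr\'asfai--Erd\H{o}s--S\'os theorem $\delta(R)\le\lfloor 2(8n+1)/5\rfloor$. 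Pick a vertex $v$ of minimum red degree; then $d_{B}(v)=8n-d_{R}(v)$ is large (at least $4n+5$ once $n\ge 6$). Now $R[N_{B}(v)]$ is triangle-free, so $B[N_{B}(v)]$ has independence number at most $2$; since $r(K_{3},K_{4})=9$, greedily deleting a disjoint blue $K_{4}$ while at least nine vertices remain yields at least $n$ disjoint blue $K_{4}$'s in $N_{B}(v)$, which together with $v$ form a blue $F_{4,n}$ --- contradiction. The small values $n\in\{4,5\}$, for which the degree bound is just short, are handled by the same scheme with the greedy estimate replaced by a sharper $K_{4}$-packing bound for graphs of independence number at most $2$ (obtained from a clique cover of such a graph, whose size equals the chromatic number of its triangle-free complement).

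\emph{Upper bound for $r_{\ast}$.} Let $K_{8n}\sqcup S_{4n+4}$ be $2$-coloured with no red $K_{3}$, let $v$ be the hub joined to a set $X$ with $|X|=4n+4$, and suppose $B$ has no $F_{4,n}$. Inside the $K_{8n}$ one again gets $\Delta(R_{0})\le 4n$ for the red graph $R_{0}$; if $R_{0}$ is non-bipartite, Andr\'asfai--Erd\H{o}s--S\'os gives a vertex of $K_{8n}$ with blue degree at least $4n+5$ inside $K_{8n}$ and the $K_{4}$-packing argument produces a blue $F_{4,n}$ lying wholly in $K_{8n}$ --- a contradiction not involving $v$. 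Hence $R_{0}$ is bipartite, and since its larger class cannot reach $4n+1$ vertices, $V(K_{8n})=V_{1}\cup V_{2}$ with $|V_{1}|=|V_{2}|=4n$ and all cross-edges red except for a blue ``bridge'' graph $M$; if some vertex had $M$-degree at least $4$ one recentres there --- $n-1$ blue $K_{4}$'s inside its own class plus one from its bridge-neighbours --- giving a blue $F_{4,n}$ inside $K_{8n}$, so $\Delta(M)\le 3$. To avoid a red $K_{3}$ at $v$, its red neighbours in $V_{1}$ and its red neighbours in $V_{2}$ must be completely joined by $M$; as $\Delta(M)\le 3$, this means either $v$ has at most three red neighbours in each class, or all red neighbours of $v$ lie in one class, say $V_{1}$. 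Also $v$ has at most three blue neighbours in each class, since otherwise recentring at such a neighbour and using $v$ with three further blue neighbours as one $K_{4}$ (the rest of that class supplying $n-1$ more) would yield a blue $F_{4,n}$. In the first alternative $|X|\le 12<4n+4$; in the second, $|X\cap V_{2}|\le 3$ and $|X\cap V_{1}|\le |V_{1}|=4n$, so $|X|\le 4n+3$ --- a contradiction either way.

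\emph{Main obstacle.} Everything hinges on the near-extremal case, where $R$ is essentially a balanced complete bipartite graph: there the naive ``$n$ disjoint blue $K_{4}$'s in one neighbourhood'' count fails by a bounded amount that is sensitive to the parity of $n$. The rescue is the combination of the observation that on $8n+1$ (odd) vertices a bipartite red graph already forces a blue $K_{4n+1}$ with the Andr\'asfai--Erd\H{o}s--S\'os structure of non-bipartite triangle-free graphs; dovetailing this with a tight treatment of the bipartite-but-not-complete-bipartite regime and of the few smallest $n$ is the delicate, bookkeeping-heavy part.
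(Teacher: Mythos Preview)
Your overall architecture is genuinely different from the paper's. The paper never invokes Andr\'asfai--Erd\H{o}s--S\'os; instead it proves directly (its Lemma~2.4) that every $(K_{3},F_{4,n})$-free colouring of $K_{8n}$ has $B\supseteq 2K_{4n}$, by first locating two blue $K_{4n-3}$'s via the bound $r(K_{3},nK_{4})=4n+3$ and then absorbing the six leftover vertices one by one with an elementary structural lemma. This gives a uniform argument for all $n\ge 4$, and both $r$ and $r_{\ast}$ follow quickly from that single structure result. Your AES route is slicker for large $n$ and pleasantly conceptual, but it is not a complete proof as written.

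The concrete gap is the small-$n$ regime, and it is wider than you indicate. For $r$, with $N=8n+1$ you need $d_{B}(v)\ge 4n+5$ to make the greedy $K_{4}$-packing (via $r(K_{3},K_{4})=9$) produce $n$ copies; AES only gives $d_{B}(v)\ge 8n-\lfloor(16n+2)/5\rfloor$, which first reaches $4n+5$ at $n=6$, so $n\in\{4,5\}$ are open. For $r_{\ast}$ you need the same conclusion inside $K_{8n}$, where $d_{B}(v)\ge 8n-1-\lfloor 16n/5\rfloor$; this first reaches $4n+5$ only at $n=7$, so $n=6$ is also uncovered there --- your plan mentions only $n\in\{4,5\}$. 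More importantly, your proposed patch (``a sharper $K_{4}$-packing bound for graphs of independence number at most $2$, via a clique cover whose size equals the chromatic number of the triangle-free complement'') is not a theorem you can simply cite: triangle-free graphs can have large chromatic number, and the bare hypothesis $\alpha\le 2$ on $19$ vertices does \emph{not} force four disjoint $K_{4}$'s (the greedy/Ramsey count stalls at three, and nothing you have written excludes the bad configurations). To close these cases you would need to exploit further structure of $R$ near the AES threshold (e.g.\ the Andr\'asfai classification of triangle-free graphs with $\delta>N/3$, forcing a $C_{5}$-blow-up shape) and then run a separate packing argument there; that is real additional work, not bookkeeping. If you do not want to do this, the cleanest fix is to replace the AES step by the paper's direct route to $B\supseteq 2K_{4n}$, after which your endgame for $r_{\ast}$ (the analysis with the bridge graph $M$ and the two alternatives for the hub) is correct and essentially matches the paper's.

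A minor slip in your lower bound for $r_{\ast}$: when the centre lies in $V_{2}$, the hub $v$ has blue-degree up to $2$ (not $1$) in the relevant neighbourhood; this does not affect the conclusion, since degree $\le 2$ still prevents $v$ from sitting in a blue $K_{4}$.
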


\begin{thm}\label{thm 1.7}
Let $m$, $s$, $t$, $n$ be four positive integers with $n\geq m\geq 3$. Then $tn(m+s-2)+s\leq r(K_{m}, sF_{t,n})\leq (tn+1)(s-1)+r(K_{m},F_{t,n})$.
\end{thm}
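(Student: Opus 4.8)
The plan is to prove the two inequalities separately; both arguments are short, the lower bound coming from an explicit coloring and the upper bound from a greedy extraction of fans.

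For the lower bound $r(K_m, sF_{t,n})\ge tn(m+s-2)+s$, I would exhibit a $(K_m,sF_{t,n})$-free coloring of $K_N$ with $N=tn(m+s-2)+s-1$. The key is the decomposition $N=tn(m-2)+\big((tn+1)s-1\big)$. Partition $V(K_N)$ into $m-2$ sets $A_1,\dots,A_{m-2}$ of size $tn$ and one set $B$ of size $(tn+1)s-1$; color every edge inside one of these $m-1$ sets blue and every edge between two of them red. Then the red graph is the complete $(m-1)$-partite graph on these parts, so its clique number is $m-1$ and there is no red $K_m$. The blue graph is the disjoint union $(m-2)K_{tn}\cup K_{(tn+1)s-1}$. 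Since $F_{t,n}=K_1+nK_t$ is connected and has $tn+1$ vertices, any blue copy of $F_{t,n}$ lies inside a single blue component: the components $K_{tn}$ are too small to contain even one copy, while $K_{(tn+1)s-1}$ has strictly fewer than $s(tn+1)$ vertices and hence contains at most $s-1$ pairwise disjoint copies. Therefore the blue graph contains no $sF_{t,n}$, which gives $r(K_m,sF_{t,n})\ge N+1$.

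For the upper bound, set $N=(tn+1)(s-1)+r(K_m,F_{t,n})$ and take any red/blue coloring of $K_N$ with no red $K_m$. I would extract blue fans greedily: since $N\ge r(K_m,F_{t,n})$ and no red $K_m$ occurs in any induced subgraph, any $r(K_m,F_{t,n})$ of the vertices induce a blue $F_{t,n}$; call it $H_1$ and delete its $tn+1$ vertices. After deleting pairwise disjoint blue copies $H_1,\dots,H_i$, the number of remaining vertices is $N-i(tn+1)=(tn+1)(s-1-i)+r(K_m,F_{t,n})\ge r(K_m,F_{t,n})$ whenever $i\le s-1$, so a further disjoint blue $F_{t,n}$ can always be found. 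Running this for $i=0,1,\dots,s-1$ yields $s$ pairwise disjoint blue copies of $F_{t,n}$, i.e. a blue $sF_{t,n}$, so $r(K_m,sF_{t,n})\le N$. (Equivalently, this is the telescoped one-step recursion $r(K_m,sF_{t,n})\le r(K_m,(s-1)F_{t,n})+(tn+1)$.)

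Neither half is technically deep, so the ``main obstacle'' is only a matter of setting things up correctly. On the construction side one must choose the part sizes so that simultaneously the red clique number stays at $m-1$ and the large blue clique sits just below the $s(tn+1)$ threshold; once the split $N=tn(m-2)+\big((tn+1)s-1\big)$ is spotted, everything else is immediate, the one genuinely necessary observation being that $F_{t,n}$ is connected so that each of the $s$ required copies is trapped inside one blue component. On the extraction side the only care needed is the bookkeeping ensuring at least $r(K_m,F_{t,n})$ vertices survive at each of the $s$ steps. Finally I would note that when $r(K_m,F_{t,n})=tn(m-1)+1$ the lower and upper bounds of the theorem coincide, which is presumably the reason for stating the result in exactly this form.
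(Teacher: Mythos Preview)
Your proposal is correct and matches the paper's proof essentially line for line: the lower bound uses the same $(m-1)$-partite coloring with parts of sizes $tn,\dots,tn,(tn+1)s-1$, and the upper bound is the same greedy extraction of $s$ disjoint blue fans, removing $tn+1$ vertices at a time while at least $r(K_m,F_{t,n})$ remain. Your write-up is in fact slightly more explicit than the paper's in justifying why the blue graph contains no $sF_{t,n}$ (via connectedness and the vertex count).
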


As direct consequences of Theorem~\ref{thm 1.4}, Theorem~\ref{thm 1.5}-\ref{thm 1.7} and Theorem~\ref{thm 1.6}, Theorem~\ref{thm 1.7}, respectively, we have the following two corollaries.

\begin{cor}\label{cor 1.9}
$r(K_{3}, sF_{t,n})=tn(s+1)+s$ for any $t\in\{3,4\},n\geq t$ and $s\geq1$.
\end{cor}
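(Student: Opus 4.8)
The plan is to derive Corollary~\ref{cor 1.9} directly from the sandwich bound in Theorem~\ref{thm 1.7} specialized to $m=3$, combined with the exact small-case values of $r(K_{3},F_{t,n})$ supplied by Theorem~\ref{thm 1.4} and Theorem~\ref{thm 1.5}. Taking $m=3$ in Theorem~\ref{thm 1.7} already yields the lower bound: for $n\geq 3$ we have $r(K_{3},sF_{t,n})\geq tn(s+1)+s$. So the only remaining content is to show that the upper bound $(tn+1)(s-1)+r(K_{3},F_{t,n})$ collapses to the same quantity once we insert the correct value of $r(K_{3},F_{t,n})$.

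First I would treat $t=3$. By Theorem~\ref{thm 1.4}, $r(K_{3},F_{3,n})=6n+1$ for $n\geq 3$, so Theorem~\ref{thm 1.7} gives
\[
r(K_{3},sF_{3,n})\leq (3n+1)(s-1)+6n+1=3ns-3n+s-1+6n+1=3n(s+1)+s,
\]
which together with the lower bound yields $r(K_{3},sF_{3,n})=3n(s+1)+s$ for all $n\geq 3$ and $s\geq 1$. Next I would treat $t=4$. By Theorem~\ref{thm 1.5}, $r(K_{3},F_{4,n})=8n+1$ for $n\geq 4$, so Theorem~\ref{thm 1.7} gives
\[
r(K_{3},sF_{4,n})\leq (4n+1)(s-1)+8n+1=4ns-4n+s-1+8n+1=4n(s+1)+s,
\]
again matching the lower bound, so $r(K_{3},sF_{4,n})=4n(s+1)+s$ for all $n\geq 4$ and $s\geq 1$. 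The hypothesis ranges are consistent: Theorem~\ref{thm 1.7} requires $n\geq m=3$, Theorem~\ref{thm 1.4} requires $n\geq 3$, and Theorem~\ref{thm 1.5} requires $n\geq 4$, all implied by the stated hypothesis $n\geq t$.

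There is essentially no obstacle at the level of the corollary itself — it is a one-line arithmetic consequence of the two preceding theorems, and the $s=1$ instance is merely a consistency check recovering $r(K_{3},F_{t,n})=2tn+1$. The genuine difficulty lies upstream: proving the upper bound $r(K_{m},sF_{t,n})\leq (tn+1)(s-1)+r(K_{m},F_{t,n})$ of Theorem~\ref{thm 1.7} (which I would expect to be handled by a greedy/embedding argument that extracts one blue $F_{t,n}$ from a set of roughly $tn+1$ vertices and then recurses on the remaining $s-1$ copies), and establishing the exact value $r(K_{3},F_{4,n})=8n+1$ in Theorem~\ref{thm 1.5}.
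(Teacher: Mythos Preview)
Your proposal is correct and matches the paper's approach exactly: the paper presents Corollary~\ref{cor 1.9} as an immediate consequence of Theorem~\ref{thm 1.7} (specialized to $m=3$) together with the exact values $r(K_{3},F_{3,n})=6n+1$ and $r(K_{3},F_{4,n})=8n+1$ from Theorems~\ref{thm 1.4} and~\ref{thm 1.5}. Your arithmetic and hypothesis checks are all in order.
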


\begin{cor}\label{cor 1.10}
Let $m$, $s$, $t$, $n$ be four positive integers with  $m\geq 3$ and $tn\geq \max \{(m+1)(m+C(t,m)+t)+3, \frac{(m-1)^{2}}{m-2}t\}$, where $C(t,m)$ is a constant depending only on $t$ and $m$. Then $r(K_{m}, sF_{t,n})=tn(m+s-2)+s$.
\end{cor}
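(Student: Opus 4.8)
The plan is to prove the two inequalities separately; both arguments are short and self-contained, with the only real care needed in the bookkeeping.

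\emph{Lower bound.} To get $r(K_{m}, sF_{t,n})\geq tn(m+s-2)+s$ I would exhibit a $(K_{m}, sF_{t,n})$-free coloring of $K_{N}$ with $N=tn(m+s-2)+s-1$. Partition $V(K_{N})$ into $m-1$ classes $V_{1},\dots,V_{m-1}$ with $|V_{1}|=s(tn+1)-1=stn+s-1$ and $|V_{i}|=tn$ for $2\leq i\leq m-1$; one checks $\sum_{i}|V_{i}|=N$. Color an edge blue if both endpoints lie in a common class and red otherwise. The red graph is complete $(m-1)$-partite, so it has no $K_{m}$. The blue graph is the disjoint union of the cliques on $V_{1},\dots,V_{m-1}$. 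Since $F_{t,n}$ is connected on $tn+1$ vertices, every blue $F_{t,n}$ lies inside a single class; no $V_{i}$ with $i\geq 2$ is large enough to contain even one copy ($tn<tn+1$), and $|V_{1}|=s(tn+1)-1<s(tn+1)$ rules out $s$ pairwise vertex-disjoint copies inside $V_{1}$. Hence the blue graph contains no $sF_{t,n}$, so $r(K_{m}, sF_{t,n})>N$.

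\emph{Upper bound.} For $r(K_{m}, sF_{t,n})\leq (tn+1)(s-1)+r(K_{m},F_{t,n})$ I would run the standard greedy ``peeling'' argument. Put $R=r(K_{m},F_{t,n})$ and $N'=(tn+1)(s-1)+R$, and take an arbitrary red/blue coloring of $K_{N'}$ containing no red $K_{m}$; I claim it contains $s$ pairwise vertex-disjoint blue copies of $F_{t,n}$. Suppose $i$ such copies have already been extracted, where $0\leq i\leq s-1$. The number of untouched vertices is $N'-i(tn+1)=(tn+1)(s-1-i)+R\geq R$, and the induced coloring on them still has no red $K_{m}$, so by the definition of $R$ there is a blue $F_{t,n}$ among them; extract it (it uses $tn+1$ vertices) as the $(i+1)$-st copy. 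Iterating for $i=0,1,\dots,s-1$ produces a blue $sF_{t,n}$. Thus every coloring of $K_{N'}$ has a red $K_{m}$ or a blue $sF_{t,n}$, which gives the bound.

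\emph{Obstacles and remarks.} There is no serious obstacle here; this is the kind of argument used for many $r(G,sH)$-type bounds in the spirit of Burr, Erd\H{o}s and Spencer. The points that need attention are only elementary: in the construction, verifying the class-size arithmetic and the two packing facts $F_{t,n}\not\subseteq K_{tn}$ and $sF_{t,n}\not\subseteq K_{s(tn+1)-1}$; and in the peeling argument, making sure the residual vertex count never drops below $R$ before all $s$ copies have been found, which is exactly what the slack $(tn+1)(s-1)$ buys. I would note that the hypothesis $n\geq m\geq 3$ is not actually needed for either bound at this level of generality (it is inherited from the surrounding results); it is the separate identity $r(K_{m},F_{t,n})=tn(m-1)+1$, available for small $m$ and for large $n$, that makes the two bounds of Theorem~\ref{thm 1.7} coincide and yields Corollaries~\ref{cor 1.9} and \ref{cor 1.10}.
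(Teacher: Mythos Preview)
Your proposal is correct and follows exactly the paper's approach: the lower-bound construction via the complete $(m-1)$-partite graph with one large class of size $s(tn+1)-1$ and $m-2$ classes of size $tn$, and the greedy peeling for the upper bound, are precisely the proof of Theorem~\ref{thm 1.7} in the paper; the corollary then follows by plugging in $r(K_{m},F_{t,n})=tn(m-1)+1$ from Theorem~\ref{thm 1.6} (with $s=1$), just as you indicate in your final remark.
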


Further, combining Theorem \ref{thm 1.7}  with Conjecture~\ref{Conjecture 1.2} and Theorem~\ref{thm 1.3}, we have the following corollary.

\begin{cor}\label{cor 1.8}
Let $m$, $s$, $t$, $n$ be four positive integers with $n\geq m\geq 3$. If $3\leq m\leq 6$, then $r(K_{m}, sF_{n})=2n(s+m-2)+s$. In general, if Conjecture~\ref{Conjecture 1.2} is true, then $r(K_{m}, sF_{n})= 2n(s+m-2)+s$.
\end{cor}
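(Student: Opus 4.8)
The plan is to derive this corollary purely as an arithmetic consequence of Theorem~\ref{thm 1.7} specialized to $t=2$, combined with the exact values of $r(K_m,F_n)$ supplied by Theorem~\ref{thm 1.3} (for $3\le m\le 6$) or by Conjecture~\ref{Conjecture 1.2} (in general). Recall that $F_n=F_{2,n}$, so putting $t=2$ in Theorem~\ref{thm 1.7} gives, for all $n\ge m\ge 3$,
\[
2n(m+s-2)+s\ \le\ r(K_m,sF_n)\ \le\ (2n+1)(s-1)+r(K_m,F_n).
\]
The lower bound already matches the claimed value, so it remains only to show that the right-hand side equals $2n(m+s-2)+s$ under each of the two hypotheses.

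First I would treat the range $3\le m\le 6$. Here Theorem~\ref{thm 1.3} yields $r(K_3,F_n)=4n+1$, $r(K_4,F_n)=6n+1$, $r(K_5,F_n)=8n+1$, and $r(K_6,F_n)=10n+1$, each valid precisely when $n\ge m$; in every case this reads $r(K_m,F_n)=2n(m-1)+1$. Substituting into the upper bound,
\[
(2n+1)(s-1)+2n(m-1)+1=2n\bigl((s-1)+(m-1)\bigr)+\bigl((s-1)+1\bigr)=2n(s+m-2)+s,
\]
which coincides with the lower bound, hence $r(K_m,sF_n)=2n(s+m-2)+s$. The general statement is identical: if Conjecture~\ref{Conjecture 1.2} holds, then $r(K_m,F_n)=2n(m-1)+1$ for all $n\ge m\ge 3$, and the same substitution closes the gap.

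Since the corollary is a direct numerical consequence, there is no genuine obstacle internal to its proof; all the substantive work sits in Theorem~\ref{thm 1.7} (especially its upper bound $r(K_m,sF_{t,n})\le (tn+1)(s-1)+r(K_m,F_{t,n})$, which one expects to prove by a greedy peeling argument that repeatedly extracts a copy of $F_{t,n}$ and deletes roughly $tn$ vertices together with the joining structure) and in the small-case computations of Theorem~\ref{thm 1.3}. The only point needing a moment's care is bookkeeping the hypotheses: the condition $n\ge m$ in this corollary is exactly what both Theorem~\ref{thm 1.7} and the relevant instances of Theorem~\ref{thm 1.3} (respectively Conjecture~\ref{Conjecture 1.2}) require, so no additional restriction on $n$ is introduced and the two bounds meet over the full stated range.
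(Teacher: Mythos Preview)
Your proposal is correct and follows exactly the approach indicated by the paper, which simply states that the corollary is obtained by combining Theorem~\ref{thm 1.7} with Theorem~\ref{thm 1.3} and Conjecture~\ref{Conjecture 1.2}. You have just written out the straightforward arithmetic that the paper leaves implicit; the final paragraph of commentary about the peeling argument behind Theorem~\ref{thm 1.7} is extraneous but harmless.
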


Finally, we prove the following result.

\begin{thm}\label{thm 1.11}
Let $t$, $n$, $m$ be three positive integers with $t\geq 3$ and $2m\geq \max \{(t+1)(t+C(t)+2)+3, \frac{(t-1)^{2}}{t-2}((n-1)(t-1)+2)\}$ where $C(t)$ is a constant depending only on $t$. Then $r(F_{m}, F_{t,n})\leq \max\{m,n\}+(t-1)(2m+n)+n+m$.
\end{thm}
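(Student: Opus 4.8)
\medskip
\noindent\textbf{Proof idea.}\quad The plan is to reduce the problem to colourings of a complete graph and feed Theorem~\ref{thm 1.6} in as a black box. Throughout, write $R_x$ and $B_x$ for the red and blue neighbourhoods of a vertex $x$. First note that for $t\ge 3$ one has $\tfrac{(t-1)^{2}}{t-2}\ge 2$ and $(n-1)(t-1)+2\ge 2n$, so the hypothesis $2m\ge\tfrac{(t-1)^{2}}{t-2}((n-1)(t-1)+2)$ forces $m>n$; hence $\max\{m,n\}=m$ and the inequality to prove becomes $r(F_{m},F_{t,n})\le t(2m+n)$. Reading Theorem~\ref{thm 1.6} with the substitution ``clique size'' $m\mapsto t$, ``number of copies'' $s\mapsto j$, ``block size'' $t\mapsto 2$, ``index'' $n\mapsto m$, and taking $C(t):=C(2,t)$, its hypothesis turns into exactly the hypothesis of the present theorem for every $j\le n$, giving
$$r(jK_{t},F_{2,m})=2m(t-1)+j\qquad(1\le j\le n).$$
Consequently, any vertex set of size at least $2m(t-1)+j$ on which there is no red $F_{2,m}$ contains $j$ vertex-disjoint blue $K_{t}$'s; I would invoke this repeatedly.

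Set $N:=t(2m+n)$ and suppose, for contradiction, that some red/blue colouring of $K_{N}$ contains neither a red $F_{m}=F_{2,m}$ nor a blue $F_{t,n}$. If some vertex $v$ had $\deg_{B}(v)\ge 2m(t-1)+n$, then $B_{v}$ (which carries no red $F_{2,m}$) would contain $n$ vertex-disjoint blue $K_{t}$'s, and together with $v$ these form a blue $F_{t,n}$ --- impossible. Hence $\deg_{R}(v)\ge N-1-(2m(t-1)+n-1)=n(t-1)+2m$ for every $v$. In the other direction, a maximum red matching in the red graph on $R_{v}$ has at most $m-1$ edges (otherwise a red $F_{2,m}$ centred at $v$), so deleting its $\le 2(m-1)$ vertices from $R_{v}$ leaves a blue clique of size $\ge n(t-1)+2$; since a blue $K_{tn+1}$ is a blue $F_{t,n}$, this clique has size $\le tn$, so $\deg_{R}(v)\le tn+2m-2$. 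Thus every red degree lies in $[\,n(t-1)+2m,\ tn+2m-2\,]$, an interval with only $n-1$ integers --- empty when $n=1$, which settles that case.

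It remains to eliminate this near-regular regime, and this is the crux. I would pick a vertex $v$ of maximum red degree, take the blue clique $Q\subseteq R_{v}$ of size $\ge n(t-1)+2$ produced above, and analyse the blue edges between $Q$ and $U:=V(K_{N})\setminus(Q\cup\{v\})$. On one hand, no $u\in U$ may be blue-adjacent to too large a portion of $Q$: otherwise one assembles a blue $F_{t,n}$ centred at $u$ by using blue $t$-subsets of $Q$ for several blocks and completing the remaining blocks inside $B_{u}$ via the Ramsey numbers $r(jK_{t},F_{2,m})=2m(t-1)+j$. On the other hand, near-regularity makes each vertex of $Q$ have few red neighbours, hence many blue ones, so there are many blue edges between $Q$ and $U$. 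Balancing these two estimates --- and this is exactly where the remaining hypothesis $2m\ge(t+1)(t+C(t)+2)+3$, the stability constant inherited from \cite{A}, enters, in the same spirit as the proof of Theorem~\ref{thm 1.6} --- should yield some $u\in U$ whose blue neighbourhood contains $n$ vertex-disjoint blue $K_{t}$'s, hence a blue $F_{t,n}$, the desired contradiction.

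The reduction and the degree bookkeeping are routine; the main obstacle is this last step. The difficulty is that the crude estimate ``a blue $K_{tn+1}$ is a blue $F_{t,n}$'' wastes a $\Theta(n)$ additive term --- precisely the gap between the $\deg_{B}(v)\ge 2m(t-1)+1$ that always holds and the $\deg_{B}(v)\ge 2m(t-1)+n$ that would let one finish immediately via $r(nK_{t},F_{2,m})$. Beating this $\Theta(n)$ gap needs a genuinely sharp counting argument in the near-regular regime, and making it rigorous, with the stability input from \cite{A} doing the essential work, is where the real effort goes.
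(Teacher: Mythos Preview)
Your reduction and the blue-degree bound $\deg_B(v)\le 2m(t-1)+n-1$ via Theorem~\ref{thm 1.6} are exactly what the paper does. The gap is precisely where you say it is: your red-degree bound $\deg_R(v)\le tn+2m-2$ is too weak by $n-1$, and the ``near-regular regime'' elimination you sketch is not carried out --- and in fact is unnecessary.

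The paper closes the gap not by any stability or counting argument inside the near-regular window, but by sharpening the red-degree bound itself. The point you are missing is that the set $R_v$ carries no red $mK_2$ \emph{and} no blue $F_{t,n}$, so $|R_v|\le r(mK_2,F_{t,n})-1$. The paper computes this Ramsey number exactly (Lemma~\ref{lem 2.7}): for $m\ge n$ one has $r(mK_2,F_{t,n})=(t-1)n+2m$, proved by a short induction together with the Hajnal--Szemer\'edi theorem (Lemma~\ref{lem 2.6}) to find $n$ disjoint blue $K_t$'s in a nearly complete blue graph. This gives $\deg_R(v)\le (t-1)n+2m-1$, which is exactly the $n-1$ improvement you were looking for. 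Then
\[
N-1=\deg_R(v)+\deg_B(v)\le \bigl[(t-1)n+2m-1\bigr]+\bigl[2m(t-1)+n-1\bigr]=t(2m+n)-2=N-2,
\]
an immediate contradiction. So the whole ``crux'' you anticipate dissolves: replace your crude estimate ``a blue $K_{tn+1}$ contains $F_{t,n}$'' by the sharp value of $r(mK_2,F_{t,n})$, and the proof is a two-line degree count.
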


In the following section, we give a proof of Theorem~\ref{thm 1.5} and, in the last section, we give the proofs of Theorem~\ref{thm 1.7} and Theorem~\ref{thm 1.11}.

\section{Proof of Theorem~\ref{thm 1.5} }

Let $G=(V,E)$ be a graph. For any subset $S\subseteq V$ or $S\subseteq E$, we use $G[S]$ to denote the subgraph of $G$ induced by $S$. A {\it neighbour} of a vertex $v$ is a vertex adjacent to $v$. The set of all neighbours of $v$ is denoted by $N_{G}(v)$ and the degree of $v$ is defined to be $d_{G}(v)=|N_{G}(v)|$. For a subset $U\subseteq V$, $N_{G}(v,U)$ denotes the set of all neighbors of $v$ in $U$ and $d_{G}(v,U)=|N_{G}(v,U)|$ denotes the degree of $v$ in $U$. In the definition of Ramsey number, a red/blue edge-coloring  of $K_{r}$ on vertex set $V$ can be associated with graphs $(V,R)$ and $(V,B)$, where $R$ and $B$ consist of all red and blue edges, respectively. Thus they are complementary to each other. Conversely, any graph $G$ and its complement $\overline{G}$ can be associated with a red/blue edge-coloring of a complete graph on $V$ in which the edges in $G$ and  $\overline{G}$ are colored in red and blue, respectively. Throughout the following, for a red/blue edge-coloring, we also use $R$ (or $B$) to denote the spanning subgraph induced by the red edges (or blue edges) if no confusion can occur.

The {\it chromatic surplus} $s(G)$ of a graph $G$ is the minimum size of a color class over all proper vertex-colorings of $G$ by using $\chi(G)$ colors. Burr \cite{S} proved that, for any connected graph $H$ of order $n\geq s(G)$,
\begin{equation}\label{1}
r(G,H)\geq (\chi(G)-1)(n-1)+s(G).
\end{equation}

A graph $H$ is called {\it $G$-good} if the equality in (1) holds.

Let $U_{1},U_{2},\ldots ,U_{\chi(G)}$ be the color classes of vertices under a proper vertex coloring of $G$ and $|U_{1}|=s(G)$. Denote
$$\tau(G)=\min \min_{v\in U_{1}} \min_{2\leq i \leq \chi(G)} d_{G}(v,U_{i}),$$
where the first minimum is taken over all proper vertex colorings of $G$ with $\chi(G)$ colors.


We begin our proof with the following four lemmas.

\begin{lem}\label{lem 2.1}\cite{PJ}
For $n\geq m\geq 1$ and $n\geq 2$, $r(mK_{3}, nK_{4})=4n+2m+1$.
\end{lem}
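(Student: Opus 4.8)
The proof naturally splits into the lower bound $r(mK_{3},nK_{4})\ge 4n+2m+1$, which I would obtain from an explicit colouring, and the matching upper bound, which I would obtain by induction. For the lower bound the plan is to exhibit an $(mK_{3},nK_{4})$-free red/blue colouring of $K_{4n+2m}$. Partition the vertex set into $X,Y,Z$ with $|X|=4n-1$, $|Y|=2m-1$ and $Z=\{w_{1},w_{2}\}$; colour red all edges inside $Y$ and all edges joining $X$ to $Y\cup Z$, and colour blue all remaining edges (those inside $X$, inside $Z$, and between $Y$ and $Z$). The blue graph is then the disjoint union of $K_{4n-1}$ with the graph obtained from an independent set of size $2m-1$ by joining it completely to the edge $w_{1}w_{2}$; since this second graph has clique number $3$, the blue graph has exactly $n-1$ pairwise vertex-disjoint copies of $K_{4}$ and no blue $nK_{4}$. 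On the red side $X$ is independent, so every red triangle uses at least two vertices of $Y$; as $|Y|=2m-1$ there are at most $\lfloor (2m-1)/2\rfloor=m-1$ pairwise vertex-disjoint red triangles, hence no red $mK_{3}$. This is a valid $(mK_{3},nK_{4})$-free colouring for every $n\ge m\ge 1$, giving $r(mK_{3},nK_{4})\ge 4n+2m+1$.

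For the upper bound the engine would be the following consequence of $r(K_{3},K_{4})=9$: every graph $G$ with $\alpha(G)\le 3$ contains at least $\lceil (|V(G)|-8)/3\rceil$ pairwise vertex-disjoint triangles. Indeed, take a maximal family of pairwise vertex-disjoint triangles in $G$; the vertices it misses induce a triangle-free graph of independence number at most $3$, hence (by $r(K_{3},K_{4})=9$) on at most $8$ vertices. Now take a red/blue colouring of $K_{N}$ with $N=4n+2m+1$ and suppose there is no red $mK_{3}$. If there is no blue $K_{4}$ at all, then $\alpha(R)\le 3$, and since $n\ge m$ and $n\ge 2$ force $N\ge\max\{6m+1,11\}$, the engine produces at least $m$ vertex-disjoint red triangles, a contradiction. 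Otherwise delete the four vertices of a blue $K_{4}$ and apply the induction hypothesis to the remaining $K_{4(n-1)+2m+1}$ — legitimate when $n-1\ge m$ — which yields a blue $(n-1)K_{4}$ to be completed to a blue $nK_{4}$. This closes all cases with $n\ge m+1$ (and $n\ge 3$).

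The step I expect to be the main obstacle is the near-diagonal range $n=m$, together with the finite base cases (chiefly $n=2$). There the peeling reduces to $r(mK_{3},(m-1)K_{4})$, a quantity the formula does not directly supply; in fact $r(mK_{3},K_{4})>2m+5$ in general (already $r(K_{3},K_{4})=9>7$), so the peeling cannot be iterated down to a single blue $K_{4}$, and the naive double-counting — a maximum family of vertex-disjoint blue $K_{4}$'s, then a maximum family of vertex-disjoint red triangles in what remains, then a residual $(K_{3},K_{4})$-free clique of at most $8$ vertices — only gives $N\le 4n+3m+1$, which is one unit of $m$ too weak. I expect the resolution to follow the Burr--Erd\H{o}s--Spencer template: when both monochromatic packings are as large as the hypotheses allow, an exchange argument should show that the residual set, together with a bounded number of vertices taken from one of the already chosen monochromatic cliques, still contains one more blue $K_{4}$ (or one more red triangle), which forces the bound down to $4n+2m+1$. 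Making this exchange argument precise, and separately verifying the genuinely finite base cases such as $r(K_{3},2K_{4})\le 11$ and $r(2K_{3},2K_{4})\le 13$, would complete the proof.
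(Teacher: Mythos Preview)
The paper does not prove Lemma~2.1 at all: it is quoted from Lorimer~\cite{PJ} and used as a black box in the proof of Lemma~2.4. So there is no ``paper's own proof'' to compare against; you have attempted to reconstruct a result that the authors simply cite.

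Regarding the content of your sketch: the lower-bound construction is correct and cleanly presented. For the upper bound, however, you yourself flag that the argument is incomplete. The induction peels off a blue $K_{4}$ and reduces $(m,n)$ to $(m,n-1)$, which only works while $n-1\ge m$; at the diagonal $n=m$ you are left with $r(mK_{3},(m-1)K_{4})$, a case the statement does not cover, and you correctly observe that the crude packing count $4n+3m+1$ is off by $m$. The ``Burr--Erd\H{o}s--Spencer template'' exchange argument you allude to is indeed the missing ingredient, but as written it is only a hope, not an argument: you have not specified which vertices to trade between the red-triangle packing and the blue-$K_{4}$ packing, nor why the residual $8$-vertex $(K_{3},K_{4})$-Ramsey graph cannot absorb all the slack. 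The small base cases ($r(K_{3},2K_{4})\le 11$, $r(2K_{3},2K_{4})\le 13$) are also left unverified. So the upper bound half of your proposal is a plausible outline with a genuine gap at the diagonal, not a proof.
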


\begin{lem}\label{lem 2.2}\cite{Hao,Hao2}
Let $G$ be a graph with $\chi(G)\geq 2$, and $H$ a connected graph of order $n\geq  s(G)$ with minimum degree $\delta(H)$. If $H$ is $G$-good, then $r_{\ast}(G,H)\geq (\chi(G)-2)(n-1)+\min \{n,\delta(H)+\tau(G)-1\}$.
\end{lem}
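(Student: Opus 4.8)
The plan is to prove the inequality by constructing, for $k=(\chi(G)-2)(n-1)+\min\{n,\delta(H)+\tau(G)-1\}-1$, a red/blue colouring of $K_{r-1}\sqcup S_{k}$ (with $r=r(G,H)$) that contains no red $G$ and no blue $H$; by definition this gives $r_{\ast}(G,H)\ge k+1$, the claimed bound. Because $H$ is $G$-good, \eqref{1} holds with equality, so $r-1=(\chi(G)-1)(n-1)+s(G)-1$. I would start from the Burr-type extremal colouring on these $r-1$ vertices: partition them as $V_{0}\cup V_{1}\cup\cdots\cup V_{\chi(G)-1}$ with $|V_{0}|=s(G)-1$ and $|V_{i}|=n-1$ for $i\ge1$, colour edges inside a part blue and edges between parts red. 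The red graph is complete $\chi(G)$-partite, so a red $G$ would induce a proper $\chi(G)$-colouring of $G$ with one class inside $V_{0}$, impossible since every class of such a colouring has $\ge s(G)>|V_{0}|$ vertices; the blue graph is a union of cliques of orders $<n$, so it contains no copy of the connected order-$n$ graph $H$.

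Next I would attach the hub $v$ of $S_{k}$. Join $v$ in red to all of $V_{2}\cup\cdots\cup V_{\chi(G)-1}$ and to a $(\tau(G)-1)$-subset $W'$ of $V_{1}$ (note $\tau(G)\ge1$), and in blue to all of $V_{0}$ and to a further $(\delta(H)-s(G))$-subset $W$ of $V_{1}\setminus W'$, leaving $v$ non-adjacent to everything else; a short count gives $d(v)=k$ when $\delta(H)+\tau(G)-1\le n$, and in the other regime the budget instead makes $v$ blue-adjacent to most of $V_{1}$ and non-adjacent to $V_{0}$, a routine variant. The design principle is twofold: $v$ has only $\tau(G)-1$ red neighbours in $V_{1}$ and none in $V_{0}$, which will kill red copies of $G$; and $v$'s total blue degree is $(s(G)-1)+(\delta(H)-s(G))=\delta(H)-1$, one short of $\delta(H)$, which will kill blue copies of $H$.

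For blue-$H$-freeness: the only blue component that changes is $\{v\}\cup V_{0}\cup V_{1}$ (the two blue cliques joined through $v$), and $v$ is a cut vertex of it separating $V_{0}$ from $V_{1}$; every other blue component has order $n-1<n$. A blue $H$ cannot avoid $v$ (it would lie in a clique of order $<n$), so some $h\in V(H)$ maps to $v$; but then $N_{H}(h)$ splits between the $V_{0}$-side and the $V_{1}$-side of the cut, whence $d_{H}(h)\le(s(G)-1)+|W|=\delta(H)-1$, contradicting $d_{H}(h)\ge\delta(H)$. For red-$G$-freeness: a red $G$ must use $v$, say $\phi(h)=v$; then $G-h$ embeds in the complete $\chi(G)$-partite graph on $V_{0},\dots,V_{\chi(G)-1}$, inducing a proper colouring $c$ of $G-h$ (colour each vertex by the part its image lies in, so each class lies in a single part), while $\phi(N_{G}(h))$ avoids $V_{0}$ and meets $V_{1}$ only inside $W'$. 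The crucial step is that $h$ may then be recoloured with the colour of $V_{0}$: this produces a proper $\chi(G)$-colouring of $G$ whose class containing $h$ has at most $|V_{0}|+1=s(G)$ vertices and is therefore a smallest class, so by the definition of $\tau(G)$ the vertex $h$ has at least $\tau(G)$ neighbours in the class of $c$ lying inside $V_{1}$ — yet all of those neighbours map into $W'$, which has only $\tau(G)-1$ vertices. This contradiction finishes the argument.

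The main obstacle is exactly this last verification carried out with care: one must treat all the behaviours of the induced colouring $c$ of $G-h$ (whether it uses $\chi(G)$ colours or fewer, and which part currently plays the role of $V_{1}$), and in each case make the recolouring argument go through, always extracting the contradiction from just the two facts ``every colour class of $G$ has $\ge s(G)$ vertices'' and ``every vertex of a smallest class of $G$ has $\ge\tau(G)$ neighbours in each other class''. A secondary, bookkeeping-type difficulty is handling the two regimes of the $\min$ uniformly and dealing with the degenerate cases $s(G)=1$, $\delta(H)<s(G)$, $n$ close to $s(G)$, or $H$ having a very low-degree vertex, where the construction needs a small adjustment but the ideas are unchanged.
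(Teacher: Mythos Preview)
The paper does not prove Lemma~\ref{lem 2.2}; it is simply quoted from \cite{Hao,Hao2} with no argument supplied, so there is nothing in this paper to compare your attempt against. That said, your sketch is essentially the proof that appears in those references: start from the Burr extremal colouring on $K_{r-1}$, attach a hub $v$ whose red neighbourhood meets the distinguished part $V_1$ in only $\tau(G)-1$ vertices and whose total blue degree is held at $\delta(H)-1$, and use the recolouring trick (place $h$ in the small class $V_0$) to force a contradiction with the definition of $\tau(G)$. One point to make explicit in the second regime $\delta(H)+\tau(G)-1\ge n$: you still keep at most $\tau(G)-1$ red edges from $v$ into $V_1$ (the remaining edges to $V_1$ blue, $V_0$ dropped entirely), since without that cap the red-$G$ argument fails; with this adjustment the blue degree of $v$ is $n-\tau(G)<\delta(H)$ and both halves go through. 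Modulo that clarification and the degenerate cases you already flag, your argument is correct.
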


\begin{lem}\label{lem 2.3}
Let $G$ be a graph whose vertex set is partitioned into three subsets $V_{1}$, $V_{2}$ and $V_{3}$ with $G[V_{1}]=K_{4n-k}$ and $G[V_{2}]=K_{4n-k}$, where $1 \leq k\leq 3$ and $n\geq 4$. If $G$ has independence number $\alpha(G)$ at most two and contains no $F_{4,n}$, then for every vertex $w\in V_{3}$, $w$ is adjacent to all vertices of either $V_{1}$ or $V_{2}$.
\end{lem}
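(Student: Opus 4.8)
The plan is to argue by contradiction. Suppose some $w\in V_{3}$ is adjacent to neither all of $V_{1}$ nor all of $V_{2}$; I will then produce a copy of $F_{4,n}=K_{1}+nK_{4}$ inside $G$. Write $B_{i}=V_{i}\setminus N_{G}(w)$ for $i=1,2$, so that both $B_{1}$ and $B_{2}$ are nonempty by assumption. The first observation — and the only place the hypothesis $\alpha(G)\leq 2$ is really used — is that $B_{1}\cup B_{2}$ must induce a clique: if $x,y\in B_{1}\cup B_{2}$ were nonadjacent, then $\{w,x,y\}$ would be an independent set of size $3$. In particular every vertex of $B_{1}$ is adjacent to every vertex of $B_{2}$.

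Next I split into two cases according to $\max\{|B_{1}|,|B_{2}|\}$; without loss of generality assume $|B_{1}|\geq |B_{2}|$. If $|B_{1}|\geq 4$, pick $u_{2}\in B_{2}$ and build an $F_{4,n}$ with hub $u_{2}$. Since $G[V_{2}]=K_{4n-k}$ with $k\leq 3$, the clique $V_{2}\setminus\{u_{2}\}$ has at least $4n-4$ vertices and hence contains $n-1$ vertex-disjoint copies of $K_{4}$, each entirely inside $N_{G}(u_{2})$ because $u_{2}$ is complete to $V_{2}\setminus\{u_{2}\}$. Moreover, any $4$ vertices of $B_{1}\subseteq V_{1}$ form one more $K_{4}$, disjoint from $V_{2}$ and joined to $u_{2}$ by the clique property of $B_{1}\cup B_{2}$. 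Together these give $n$ disjoint $K_{4}$'s in $N_{G}(u_{2})$, i.e.\ an $F_{4,n}$, a contradiction.

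If instead $|B_{1}|\leq 3$ (hence also $|B_{2}|\leq 3$), I build an $F_{4,n}$ with hub $w$. Then $|N_{G}(w)\cap V_{i}|=|V_{i}|-|B_{i}|\geq (4n-k)-3\geq 4n-6$ for $i=1,2$, and since each $N_{G}(w)\cap V_{i}$ is a clique it contains at least $\lfloor(4n-6)/4\rfloor=n-2$ disjoint copies of $K_{4}$. As $V_{1}$ and $V_{2}$ are disjoint and all these cliques lie in $N_{G}(w)$, we get at least $2(n-2)=2n-4\geq n$ disjoint $K_{4}$'s in $N_{G}(w)$, where $2n-4\geq n$ uses $n\geq 4$; this again yields an $F_{4,n}$, a contradiction. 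The two cases together finish the proof.

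The main point requiring care is the $K_{4}$-packing bookkeeping: in each case one must check that the chosen hub genuinely sees $n$ pairwise-disjoint $K_{4}$'s, keeping track of the exact clique sizes $4n-k$ for $1\leq k\leq 3$, of the single vertex removed to serve as the hub in the first case, of the sizes of $B_{1}$ and $B_{2}$, and especially of the inequality $2n-4\geq n$, which is precisely where the hypothesis $n\geq 4$ is needed.
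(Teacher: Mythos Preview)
Your proof is correct and follows essentially the same approach as the paper's: both argue by contradiction, split on whether $w$ has at least four non-neighbours in one of $V_{1},V_{2}$, and in the two cases build an $F_{4,n}$ centered at a vertex of $B_{2}$ (respectively at $w$) using the same $K_{4}$-packing counts. The only cosmetic difference is that you state the clique property of $B_{1}\cup B_{2}$ upfront and take $n-2$ copies of $K_{4}$ from each side in the second case, whereas the paper invokes the adjacency pointwise and takes $n-2$ copies from $V_{1}$ and $2$ from $V_{2}$.
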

\begin{proof}
Suppose to the contrary that $w$ is not adjacent to $u$ of $V_{1}$ and $v$ of $V_{2}$ for some vertex $w$ of $V_{3}$. If $w$ has four nonadjacent vertices $u,u_{1},u_{2},u_{3}$ of $V_{1}$, then these four vertices must be adjacent to $v$, otherwise $\alpha(G)\geq 3$, which is a contradiction. In this way, $G[V_{2}\cup\{u,u_{1},u_{2},u_{3}\}]$ contains an $F_{4,n}$ with $v$ as its center, also a contradiction. Therefore, $w$ has at most three nonadjacent vertices in $V_{1}$. That is, $w$ has at least $4n-k-3$ neighbors in $V_{1}$. For the same reason, $w$ has at least $4n-k-3$ neighbors in $V_{2}$. We can find $n-2$ disjoint $K_{4}$ in $N_{G}(w,V_{1})$ and two disjoint $K_{4}$ in $N_{G}(w,V_{2})$, which together with $w$ form an $F_{4,n}$, a contradiction.
\end{proof}

\begin{lem}\label{lem 2.4}
For any $n\geq 4$, $B\supseteq 2K_{4n}$ for any $(K_{3},F_{4,n})$-free coloring of $K_{8n}$.
\end{lem}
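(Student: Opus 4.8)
The plan is to show that the hypotheses force the red graph $R$ to be the balanced complete bipartite graph with both parts of size $4n$; the two parts are then independent sets of $R$, i.e.\ blue cliques, which immediately gives $B\supseteq K_{4n}\cup K_{4n}=2K_{4n}$.

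First I would record a cheap degree estimate. Since the colouring contains no red $K_3$ at all, the colouring induced on $N_B(v)$ also contains no red $K_3$, for every vertex $v$; hence if $d_B(v)\ge 4n+3=r(K_3,nK_4)$ (the value $r(K_3,nK_4)=4n+3$ coming from Lemma~\ref{lem 2.1} with parameters $(m,n)=(1,n)$, legitimate because $n\ge 4\ge 2$), then $B[N_B(v)]$ contains a blue $nK_4$, and $v$ together with it is a blue $F_{4,n}$ --- a contradiction. Thus $\Delta(B)\le 4n+2$, so $\delta(R)=8n-1-\Delta(B)\ge 4n-3$. Dually, if $d_R(v)\ge 4n+1$ then $N_R(v)$ is red-independent, hence a blue clique of size $\ge 4n+1$, which contains a blue $F_{4,n}$, again a contradiction; so $\Delta(R)\le 4n$. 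Hence $R$ is triangle-free and nearly $4n$-regular: $4n-3\le d_R(v)\le 4n$ for all $v$.

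The main step, and the one I expect to be the genuine obstacle, is to upgrade this to the statement that $R$ is bipartite. The clean route is the Andr\'asfai--Erd\H{o}s--S\'os theorem: a triangle-free graph on $N$ vertices with minimum degree exceeding $2N/5$ is bipartite. With $N=8n$ we have $2N/5=16n/5$, and $\delta(R)\ge 4n-3>16n/5$ holds exactly when $n>15/4$, i.e.\ for all $n\ge 4$ --- which is precisely where the hypothesis $n\ge 4$ is used. If one prefers a self-contained argument, the alternative is to take a shortest odd cycle of $R$ (of length at least $5$) and, using the near-regularity together with triangle-freeness, to show that the red neighbourhoods of the cycle vertices either overlap (forcing a red triangle) or jointly cover almost all of $V$ in a way that exhibits a blue $F_{4,n}$; this case analysis is the laborious part.

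Once $R$ is known to be bipartite, let $V=X\cup Y$ be a bipartition. Both $X$ and $Y$ are independent in $R$, hence induce blue cliques, and $|X|+|Y|=8n$. If $|Y|\ge 4n+1$, then $B[Y]\supseteq K_{4n+1}$, which (having $4n+1$ vertices) contains a blue $F_{4,n}$, a contradiction; symmetrically $|X|\le 4n$. Together with $|X|+|Y|=8n$ this forces $|X|=|Y|=4n$, so $B[X]=B[Y]=K_{4n}$ and therefore $B\supseteq 2K_{4n}$. Apart from the bipartiteness step, every line is a short degree count plus Lemma~\ref{lem 2.1}.
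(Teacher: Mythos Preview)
Your argument is correct. The degree bounds are exactly as in the paper, the application of the Andr\'asfai--Erd\H{o}s--S\'os theorem is legitimate (the strict inequality $4n-3>16n/5$ is equivalent to $n>15/4$, so $n\ge 4$ is precisely what is needed), and the final counting $|X|=|Y|=4n$ is forced since a blue $K_{4n+1}$ already contains $F_{4,n}$. One cosmetic point: your stated ``plan'' claims $R=K_{4n,4n}$, but your actual argument only shows (and only needs) that $R$ is bipartite with parts of size $4n$; the conclusion $B\supseteq 2K_{4n}$ follows from that alone.

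The paper takes a different, more hands-on route. After the same degree estimate $\delta(R)\ge 4n-3$, it picks a vertex $u$, notes that $N_R(u)$ is a blue $K_{4n-3}$, then picks $v$ in that clique and gets a second disjoint blue $K_{4n-3}$ inside $N_R(v)$ (disjointness coming from triangle-freeness of $R$). This leaves six leftover vertices, and a separate structural lemma (Lemma~\ref{lem 2.3}) shows each leftover vertex is blue-complete to one of the two cliques; a short case analysis then grows both cliques to size $4n$. Your approach replaces Lemma~\ref{lem 2.3} and the six-vertex case analysis by a single citation of Andr\'asfai--Erd\H{o}s--S\'os, which is shorter and more conceptual and also explains cleanly why $n\ge 4$ is the right threshold. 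The paper's approach, by contrast, is fully self-contained and avoids importing an external structural theorem.
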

\begin{proof}
Consider any ($K_{3},F_{4,n}$)-free coloring of $K_{8n}$. By Lemma~\ref{lem 2.1}, we know that $r(K_{3},nK_{4})=4n+3$, which implies that $d_{B}(u)\leq 4n+2$ or, equivalently,  $d_{R}(u)\geq 4n-3$  for any vertex $u\in V$. Since $R$ contains no red $K_{3}$, $N_{R}(u)$ contains a blue $K_{4n-3}$, denoted by $G_{1}$. Choose a vertex $v$ from $V(G_{1})$. Since $d_{R}(v)\geq 4n-3$, $N_{R}(v)$ also contains a blue $K_{4n-3}$, denoted by $G_{2}$. Therefore, we have $B\supseteq 2K_{4n-3}$.

 Let  $V_1=V(G_1), V_2=V(G_2)$ and $V_3=V(K_{8n})\setminus (V_{1}\cup V_{2})$. We note that $V_3$ consists of exactly six vertices. By Lemma~\ref{lem 2.3}, we know that every vertex in $V_3$ is blue-adjacent (adjacent by a blue edge) to all vertices of either $G_{1}$ or $G_{2}$. If at least four vertices in $V_3$ are blue-adjacent to all vertices of either $G_{1}$ or $G_{2}$, then either $B[V_3\cup V_1]$ or $B[V_3\cup V_2]$ contains a blue $F_{4,n}$, a contradiction. By symmetry, this means that exactly three vertices in $V_3$, say $x_{1},x_{2},x_{3}$, are blue-adjacent to all vertices in $V_1$ while the other three vertices, denoted by $x_{4},x_{5},x_{6}$, are blue-adjacent to all vertices in $V_2$.

If $x_{1}$ is blue-adjacent to at least four vertices in $V_2$, then $B$ has an $F_{4,n}$ with $x_{1}$ as its center, a contradiction. Hence, $x_{1}$ is blue-adjacent to at most three vertices in $V_2$. For the same reason, each of $x_{2}$ and $x_{3}$  is blue-adjacent to  at most three vertices in $V_2$. Equivalently, each of $x_1,x_{2}$ and $x_{3}$  is red-adjacent to  at least $4n-6$ vertices in $V_2$. Therefore, each pair of $x_{1}$, $x_{2}$ and $x_{3}$ are red-adjacent to a common vertex in $V_2$ as $|V_2|=4n-3$. This implies that $x_{1}x_{2}$, $x_{2}x_{3}$ and $x_{1}x_{3}$ are blue edges to avoid a red $K_{3}$. By symmetry, $x_{4}x_{5}$, $x_{5}x_{6}$ and $x_{4}x_{6}$ are also blue edges. As a result, $B[\{x_1,x_2,x_3\}\cup V_1]=K_{4n}$ and $B[\{x_4,x_5,x_6\}\cup V_1]=K_{4n}$, i.e., $B\supseteq 2K_{4n}$.
\end{proof}

\textbf{Proof of Theorem~\ref{thm 1.5}:} We first prove that $r(K_{3}, F_{4,n})=8n+1$ for $n\geq 4$. By (1),  $r(K_{3}, F_{4,n})\geq 8n+1$ follows immediately. We now need only to show the converse. Consider a red/blue edge-coloring of $K_{8n+1}$. Suppose to the contrary that $K_{8n+1}$ contains neither red $K_{3}$ nor blue $F_{4,n}$. Choose an arbitrary vertex $u\in V(K_{8n+1})$.  Since the red neighborhood (the neighbour adjacent by red edges) $N_{R}(u)$ of $u$ induces a blue complete graph, we have $d_{R}(u)\leq 4n$ and, hence, $d_{B}(u)\geq 4n$. Let $K_{8n+1}=K_{8n}+u$. It is clear that the red/blue edge-coloring restricted on $K_{8n}$ is also a ($K_{3},F_{4,n}$)-free coloring. So by lemma~\ref{lem 2.4}, we have $B-u\supseteq 2K_{4n}$. Therefore, $K_{8n+1}$ contains a blue $F_{4,n}$, a contradiction.


Next, we prove that $r_{\ast}(K_{3},F_{4,n})=4n+4$. Note that $r_{\ast}(K_{3},F_{4,n})\geq 4n+4$ follows directly from lemma~\ref{lem 2.2}. We now prove its converse.

Consider any ($K_{3},F_{4,n}$)-free coloring for $K_{8n}$. Lemma~\ref{lem 2.4} implies that the blue graph $B\supseteq 2K_{4n}$. Let $V_{1}$ and $V_{2}$ be the vertex sets of the two blue $K_{4n}$'s, respectively. Add a new vertex $v$ and an edge joining $v$ to each of  $4n+4$ vertices in $V_{1}\cup V_{2}$. It suffices to show that, no matter how to color these $4n+4$ edges, we can always find either a red $K_{3}$ or a blue $F_{4,n}$.

Note that for $n\geq 4$, there are at least four edges between $v$ and $V_{i}$ for each $i\in\{1,2\}$. Further, $v$ is adjacent to at least ten vertices of either $V_{1}$ or $V_{2}$, say $V_{1}$. Take $\{u_{1},u_{2},u_{3},u_{4}\}\subset V_{1}$ and $\{v_{1},v_{2},v_{3},v_{4}\}\subset V_{2}$. We see that $v$ is red-adjacent to some vertex of $\{u_{1},u_{2},u_{3},u_{4}\}$. Otherwise, $\{v\}\cup V_{1}$ will induce a blue $F_{4,n}$. Without loss of generality, assume $vu_{4}$ is red. Similarly, assume $vv_{4}$ is red. If $v$ has at least four blue neighbors in $V_{1}$, then we get a blue $F_{4,n}$, a contradiction. So we assume that $v$ has at most three blue neighbors in $V_{1}$. For the same reason, assume that $v_{4}$ has at most three blue neighbors in $V_{1}$. In this way, $v$ and $v_{4}$ have a common red neighbor in $V_{1}$. Therefore, we get a red $K_{3}$, again a contradiction. This completes the proof.

\section{Proof of Theorem~\ref{thm 1.7} and Theorem~\ref{thm 1.11} }

We first prove Theorem~\ref{thm 1.7}. Since the complete multipartite graph $K_{(tn+1)s-1,\underbrace{tn,tn,\ldots ,tn}_{m-2}}$ does not contain $K_{m}$ and its complement $K_{(tn+1)s-1}\cup \underbrace{K_{tn}\cup K_{tn},\ldots ,K_{tn}}_{m-2}$ does not contain $sF_{t,n}$, we have $r(K_{m}, sF_{t,n})\geq tn(s+m-2)+s$.

We now prove that $r(K_{m}, sF_{t,n})\leq (tn+1)(s-1)+r(K_{m},F_{t,n})$. Let $N=(tn+1)(s-1)+r(K_{m},F_{t,n})$. For a red/blue edge-coloring of $K_{N}$, if  $K_{N}$ has a red $K_{m}$, then we are done. Otherwise, we have a blue $F_{t,n}$ since $N>r(K_{m},F_{t,n})$. Removing the blue $F_{t,n}$ from $K_N$, we get a red/blue edge-coloring complete graph of order $(tn+1)(s-2)+r(K_{m},F_{t,n})$, which contains a blue  $F_{t,n}$ if $s-2\geq 0$. Repeating this procedure, we finally get a complete graph of order $r(K_{m},F_{t,n})-(tn+1)$, which is obtained from $K_N$ by removing $s$ blue $F_{t,n}$'s. This means $r(K_{m}, sF_{t,n})\leq (tn+1)(s-1)+r(K_{m},F_{t,n})$, which completes our proof of Theorem~\ref{thm 1.7}.


 Next, we prove Theorem~\ref{thm 1.11}. We begin with the following two lemmas.
\begin{lem}\label{lem 2.6}\cite{Hajnal}
Let $n,t$ be two positive integers with $t\geq 2$ and $G$ be a graph of order $tn$. If $\delta(G)\geq (1-\frac{1}{t})tn$, then $G$ has $n$ vertex-disjoint $K_{t}$'s.
\end{lem}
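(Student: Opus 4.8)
The statement is precisely the perfect $K_t$-factor form of the Hajnal--Szemer\'edi theorem, so my plan is to first reduce it to the equitable-colouring form and then prove that form. For the reduction I pass to the complement $\overline{G}$. Since $|V(G)|=tn$ and $\delta(G)\geq (1-\tfrac1t)tn=(t-1)n$, every vertex $v$ satisfies
\[
d_{\overline{G}}(v)=tn-1-d_{G}(v)\leq tn-1-(t-1)n=n-1,
\]
so $\Delta(\overline{G})\leq n-1$. If $\overline{G}$ admits an equitable colouring with $n$ colours, then because $n\mid tn$ each of the $n$ colour classes has exactly $t$ vertices; being independent in $\overline{G}$, each class induces a $K_{t}$ in $G$, and the $n$ classes give the required $n$ vertex-disjoint $K_{t}$'s. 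Hence it suffices to prove the equitable-colouring statement: every graph $H$ with $\Delta(H)\leq r$ has a proper colouring into $r+1$ classes whose sizes differ by at most one (here $r=n-1$).

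To prove this I would argue by induction on $|E(H)|$, keeping the vertex set fixed and (after padding with isolated vertices) assuming $(r+1)\mid |V(H)|$, so that an equitable colouring means $r+1$ classes all of the same size $k$. Remove one edge $xy$ and apply the inductive hypothesis to $H-xy$. If $x$ and $y$ receive different colours the colouring already works for $H$, so assume both lie in one class $W$. Since $d_{H}(x)\leq r<r+1$, some class $A$ contains no neighbour of $x$; moving $x$ from $W$ to $A$ restores properness but leaves $A$ oversized (size $k+1$) and $W$ undersized (size $k-1$). The entire difficulty is to repair this single size imbalance without reintroducing a monochromatic edge.

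The repair is carried out by an augmenting argument. I would form an auxiliary digraph on the colour classes in which there is an arc $V_{i}\to V_{j}$ whenever some vertex of $V_{i}$ has no neighbour in $V_{j}$, and then look for a directed path $A\to V_{i_{1}}\to\cdots\to W$. Along such a path one shifts a single vertex across each arc simultaneously: the vertex leaving $A$ lands in the next class, that class sends on a vertex with no neighbour in its successor, and so on until $W$ finally absorbs a vertex. Each move preserves properness, and the net effect returns $A$ to size $k$ and raises $W$ from $k-1$ to $k$, completing the equitable colouring of $H$.

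The main obstacle is to guarantee that such an augmenting path exists. When it does not, I would let $S$ be the set of classes reachable from $A$ in the auxiliary digraph; then $W\notin S$, and the absence of an arc leaving $S$ forces every vertex lying in a class of $S$ to have a neighbour in each of the $(r+1)-|S|$ classes outside $S$. Double-counting the edges leaving $\bigcup_{V_{i}\in S}V_{i}$ against the degree bound $\Delta(H)\leq r$ is meant to produce a contradiction. This counting is genuinely delicate: the surplus vertex in $A$ and the deficit in $W$ break the exact symmetry, so the naive degree estimate does not immediately exceed $r$, and one is forced to track classes whose sizes differ by one and to iterate the construction with a discharging scheme (as in the original Hajnal--Szemer\'edi proof, streamlined by Kierstead and Kostochka). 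Making this final discharging step go through---showing that the degree hypothesis is sharp enough to forbid a ``trapped'' reachable set---is where essentially all the work lies.
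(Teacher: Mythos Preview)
The paper does not prove this lemma at all: it is quoted verbatim as the classical Hajnal--Szemer\'edi theorem and attributed to \cite{Hajnal}, with no argument given. So your proposal differs from the paper simply in that you attempt a proof where the paper is content to cite.

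As for the content of your sketch: the reduction to the equitable-colouring formulation via the complement is correct and standard, and the edge-induction with an augmenting-path/reachability argument is exactly the Kierstead--Kostochka strategy. However, you yourself flag that the decisive step---showing that no ``trapped'' reachable set $S$ can exist---is not carried out. The naive double count you describe really does fall one short, and the actual proof requires a more careful analysis (distinguishing ``solo'' vertices, building a cascading sequence of moves, and a non-trivial counting/discharging argument). What you have written is an accurate roadmap, not a proof; if this were submitted as a proof of the lemma it would have a genuine gap at precisely the point you identify. For the purposes of this paper, matching the authors and simply citing Hajnal--Szemer\'edi (or the short proof of Kierstead and Kostochka) is entirely appropriate.
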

\begin{lem}\label{lem 2.7}
Let $s,n,t$ be three positive integers with $t\geq 2$. Then $r(sK_{2},F_{t,n})=\max\{s,n\}+(t-1)n+s$.
\end{lem}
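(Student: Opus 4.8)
The plan is to prove matching bounds; throughout write $N:=\max\{s,n\}+(t-1)n+s$, and note the identities $tn+s=n+(t-1)n+s$ and $2s+(t-1)n=s+(t-1)n+s$, so that $N=\max\{tn+s,\ 2s+(t-1)n\}$. For the \textbf{lower bound} I would exhibit two $(sK_2,F_{t,n})$-free colorings. On $tn+s-1$ vertices, color red exactly the edges meeting a fixed set of $s-1$ vertices: the red graph has matching number $s-1$ (no red $sK_2$), while the blue graph is $K_{tn}$ together with $s-1$ isolated vertices, which cannot contain the connected $(tn+1)$-vertex graph $F_{t,n}$. On $2s+(t-1)n-1$ vertices, color red exactly the edges inside a fixed set $C$ of $2s-1$ vertices: the red graph is $K_{2s-1}$, again of matching number $s-1$, and the blue graph is the join of the clique on the $(t-1)n$ vertices outside $C$ with the independent set $C$; since a blue $K_t$ uses at most one vertex of $C$, any $n$ disjoint blue $K_t$'s would consume all $(t-1)n$ vertices outside $C$ and leave no vertex available as a center, so there is no blue $F_{t,n}$. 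These give $r(sK_2,F_{t,n})\ge\max\{tn+s,\ 2s+(t-1)n\}=N$.

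For the \textbf{upper bound}, take any red/blue coloring of $K_N$ with no red $sK_2$. Then the red graph $R$ has a maximum matching $M=\{a_1b_1,\dots,a_\mu b_\mu\}$ with $\mu\le s-1$; let $W$ be the set of the remaining $|W|=N-2\mu$ vertices, which induces a blue clique. The key structural point, which I would derive from the non-augmentability of $M$, is that for each $i$ the red neighbourhoods of $a_i$ and $b_i$ inside $W$ cannot contain two distinct vertices (otherwise $M$ could be enlarged), so one endpoint $c_i\in\{a_i,b_i\}$ is blue-adjacent to all of $W$ except at most one vertex $w_i$. If $|W|\ge tn+1$ then $W$ contains a blue $K_{tn+1}\supseteq F_{t,n}$ and we are done; otherwise set $p:=tn+1-|W|\ge1$. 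Using $\mu\le s-1$ and $N\ge tn+s$ one checks $p\le\mu$, and using $|W|\ge N-2(s-1)$ together with the exact value $N=\max\{s,n\}+(t-1)n+s$ one checks $p\le n-1$; this last computation is precisely what simultaneously handles both regimes $n\ge s$ and $s>n$.

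To finish I would build a blue $F_{t,n}$ explicitly. Pick a center $v\in W$ with $v\notin\{w_1,\dots,w_p\}$, which is possible since $|W|-p=tn+1-2p\ge(t-2)n+3>0$. Greedily carve from $W\setminus\{v\}$ pairwise disjoint $(t-1)$-sets $S_1,\dots,S_p$ with $w_i\notin S_i$; because $p\le n-1$, before carving $S_i$ the available pool has size $|W|-1-(i-1)(t-1)\ge t(n-p+1)-1\ge t$, so the process never gets stuck. The $t(n-p)$ vertices of $W$ that remain split into $n-p$ disjoint blue $K_t$'s inside the clique $W$, and together with the $p$ blue $K_t$'s on the sets $S_i\cup\{c_i\}$ (cliques, since $c_i$ is blue-complete to $S_i$) we obtain $n$ pairwise disjoint blue $K_t$'s, all lying in the blue neighbourhood $N_B(v)$, because $v$ is blue-adjacent to every vertex of $W\setminus\{v\}$ and to each $c_i$ with $i\le p$ (as $v\ne w_i$). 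Hence $v$ together with this packing is a blue $F_{t,n}$, which contradicts the assumption and proves $r(sK_2,F_{t,n})\le N$.

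I expect the \textbf{main obstacle} to be exactly the bookkeeping in the last step: verifying that the number $p$ of ``padded'' $K_t$'s satisfies $p\le\min\{\mu,n-1\}$ and that the greedy carving against the forbidden vertices $w_i$ always succeeds. Both of these depend on the precise coefficient structure of $N$ and on the refined non-augmentability structure of the maximum red matching $M$; everything else reduces to routine counting and to the observation that $W$ being a blue clique lets us form blue $K_t$'s on any $t$ of its vertices without invoking a degree condition such as Lemma~\ref{lem 2.6}.
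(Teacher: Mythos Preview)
Your argument is correct, and it takes a genuinely different route from the paper's.  The paper splits into the two regimes $n\ge s$ and $s>n$; in the first it runs an induction on $s$, obtains the same ``one endpoint $c_i$ is blue to all of $W$ save one vertex'' structure you use, but then finishes by applying the Hajnal--Szemer\'edi theorem (Lemma~\ref{lem 2.6}) to the blue graph on $W\cup\{u_1,\dots,u_{s-1}\}\setminus\{w\}$, which is where the hypothesis $n\ge s$ is needed to meet the minimum-degree threshold $(1-1/t)tn$; the second regime is then reduced to the first by peeling off a red edge.  Your proof treats both regimes at once, and---because $W$ is already a blue clique---builds the $n$ disjoint blue $K_t$'s by hand (the greedy carving of the $(t-1)$-sets $S_i$ avoiding $w_i$) rather than via Hajnal--Szemer\'edi.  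This buys you an entirely elementary argument with no external packing theorem and no case split; what the paper's approach buys is that once Hajnal--Szemer\'edi is invoked the endgame is a single line.  One phrasing to tighten: ``the red neighbourhoods of $a_i$ and $b_i$ inside $W$ cannot contain two distinct vertices'' should be read as ``there are no $x\in N_R(a_i,W)$, $y\in N_R(b_i,W)$ with $x\ne y$''; under that reading your conclusion about $c_i$ and $w_i$ follows exactly as you say.
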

\begin{proof}
We separate the proof into two cases.

{\bf Case 1.} $n\geq s$.

In this case, $\max\{s,n\}+(t-1)n+s=tn+s$. Since the graph $K_{tn, s-1}$ does not contain $sK_{2}$ and its complement $K_{tn}\cup K_{s-1}$ does not contain $F_{t,n}$, we have $r(sK_{2},F_{t,n})\geq tn+s$.

We now prove $r(sK_{2},F_{t,n})\leq tn+s$ by induction on $s$. The case $s=1$ is trivial. We assume that $s\geq 2$ and assertion holds for $s-1$, i.e., $r((s-1)K_{2},F_{t,n})\leq tn+s-1$. For any red/blue edge-coloring of $K_{tn+s}$, if $K_{tn+s}$ contains no blue $F_{t,n}$, then by the induction hypothesis, $K_{tn+s}$ must contain a red $(s-1)K_{2}$ as $tn+s>r((s-1)K_{2},F_{t,n})$.
Let $M=\{u_1v_1,u_2v_2,\ldots,u_{s-1}v_{s-1}\}$ be the set of the $s-1$ red $K_2$'s, $M^*=\{u_{1},v_{1},u_{2},v_{2},\ldots u_{s-1},v_{s-1}\}$ and $W=V(K_{tn+s})\backslash M^*$. Then $|W|=tn-s+2$. We see that $W$ induces a blue $K_{tn-s+2}$, since otherwise $W$ would contain a red edge, which together with $M^*$ form a red $sK_{2}$.

If $v_{i}$ is red-adjacent to one vertex in $W$ for some $i$, then $u_{i}$ must be blue-adjacent to all other vertices in $W$, since otherwise we would obtain two independent red edges between $\{u_{i},v_{i}\}$ and $W$, which together with the $s-2$ red $K_2$'s in $M\setminus\{u_iv_i\}$ form a red $sK_{2}$. By symmetry, this implies that, for any $i\in\{1,2,\cdots,s-1\}$, either $u_i$ or $v_i$ is blue-adjacent to all but at most one vertex in $W$. By relabeling if necessary, we may assume that each $u_{i}$ is blue-adjacent to all but at most one vertex in $W$. Let  $W'$ be the set of all red neighbours of $u_{i}$  in $W$ (maybe empty). Then $|W'|\leq s-1$ and $|W\backslash W'|\geq (tn-s+2)-(s-1)\geq 3$. We thus have a vertex $w\in W$ that is blue-adjacent to each $u_{i}$. Let $G_{1}=B[W\cup \{u_{1},u_{2},\ldots u_{s-1}\}\setminus \{w\}]$. It is easy to see that $\delta(G_{1})\geq tn-s\geq (1-\frac{1}{t})tn$. So by Lemma~\ref{lem 2.6}, we obtain $n$ vertex-disjoint $K_{t}$'s in $G_{1}$. Therefore, $G_{1}+w$ contains a blue $F_{t,n}$ with $w$ as its center, a contradiction.

{\bf Case 2.} $n < s$.

In this case, $\max\{s,n\}+(t-1)n+s=(t-1)n+2s$. The desired lower bound $r(sK_{2},F_{t,n})\geq (t-1)n+2s$ follows from the fact that the graph $\overline{K}_{(t-1)n}\cup K_{2s-1}$ does not contain $sK_{2}$ and its complement $K_{(t-1)n}+\overline{K}_{2s-1}$ does not contain $F_{t,n}$.

To prove $r(sK_{2},F_{t,n})\leq (t-1)n+2s$, we show  by induction on $s\geq n$ that if a graph $\overline{G}$ of order $(t-1)n+2s$ contains no $F_{t,n}$, then $G$ contains $s$  independent edges. The assertion for the case $s=n$ follows directly from Case 1. We now assume that $s> n$. By deleting a pair of nonadjacent vertices $u$ and $v$ from $\overline{G}$, we have a subgraph $\overline{H}$ of $\overline{G}$ on $(t-1)n+2(s-1)$ vertices with $s-1\geq n$, which does not contain $F_{t,n}$. So by the induction hypothesis, $H$ contains $(s-1)K_{2}$, which together with the edge $uv$ yields an $sK_{2}$ in $G$.
\end{proof}



 Let $N=\max\{m,n\}+(t-1)(2m+n)+n+m$, and consider a red/blue edge-coloring of $K_{N}$. Suppose to the contrary that $R$ contains no $F_{m}$ and $B$ contains no blue $F_{t,n}$. By Lemma~\ref{lem 2.7}, we know that $d_{R}(u)\leq \max\{m,n\}+(t-1)n+m-1$ for any $u\in V(K_N)$ since the red neighbours of $u$ induce no $mK_2$. Furthermore, by Theorem~\ref{thm 1.6}, we have $d_{B}(u)\leq 2m(t-1)+n-1$ for any $u\in V(K_N)$. Then $N\leq \max\{m,n\}+(t-1)n+m-1 + 2m(t-1)+n-1 +1<\max\{m,n\}+(t-1)(2m+n)+n+m$, a contradiction. This completes the proof of Theorem~\ref{thm 1.11}.

\section{Acknowledgements}
This work was supported by the National Natural Science Foundation of China [Grant numbers, 11971406, 12171402].

\end{document}